\newcommand{\Pp}{\mathbb{P}}
\newcommand{\Nn}{\mathbb{N}}
\newcommand{\Zz}{\mathbb{Z}}
\newcommand{\Qq}{\mathbb{Q}} 
\newcommand{\Ff}{\mathbb{F}}
\theoremstyle{plain}
\newtheorem{theorem}{Theorem}[section]    % theorem with number
\newtheorem{twisting lemma}[theorem]{Twisting lemma}
\newtheorem{lemma}[theorem]{Lemma}       % lemma with number
\newtheorem{proposition}[theorem]{Proposition}  % lemma with number
\newtheorem{corollary}[theorem]{Corollary}   % lemma with number
\theoremstyle{remark}
\newtheorem{definition}[theorem]{Definition}      % lemma with number
\newtheorem{remark}[theorem]{Remark}   % theorem without number
\def\Gabs{\hbox{\rm G}}
\def\Gal{\hbox{\rm Gal}}
\def\max{\hbox{\rm max}}
\def\cm{\hbox{\hbox{\rm C}\kern-5pt{\raise 1pt\hbox{$|$}}}}
\def\lhfl#1#2{\smash{\mathop{\hbox to 12mm{\leftarrowfill}}
\limits^{#1}_{#2}}}
\def\rhfl#1#2{\smash{\mathop{\hbox to 12mm{\rightarrowfill}}
\limits^{#1}_{#2}}}
\def\build#1_#2^#3{\mathrel{
\mathop{\kern 0pt#1}\limits_{#2}^{#3}}}
\def\htrait#1#2{\smash{\mathop{\hbox to 12mm{\hrulefill}}
\limits^{#1}_{#2}}}
\def\sxbullet{{\raise 2pt\hbox{\bf .}}}
\begin{document}
%%%%%%%%%%%%%%%%%%%%%%%%%%%%%%%%%%%%%%%%%%%%%%%%%%%
%%%%%%%%%%%%%%%%%%%%%%%%%%%%%%%%%%%%%%%%%%%%%%%%%%%

\title[On the Malle conjecture and the self-twisted cover]{On the Malle conjecture and the self-twisted cover}

\author{Pierre D\`ebes}

\email{Pierre.Debes@math.univ-lille1.fr}

\address{Laboratoire Paul Painlev\'e, Math\'ematiques, Universit\'e Lille 1, 59655 Villeneuve d'Ascq Cedex, France}

\subjclass[2010]{Primary 11R58, 12F12, 14H30, 12E25, 11R44,  ; Secondary 14Gxx, 11Rxx, 12Fxx}

\keywords{Galois extensions, inverse Galois theory, Malle conjecture, Grunwald problem, Tchebotarev theorem, covers, specialization, twisting} %Hilbert's ir\-re\-du\-ci\-bility theorem, 

\date{\today}

\begin{abstract} 
We show that for a large class of finite groups $G$, the number of Galois extensions $E/\Qq$ of group $G$ and discriminant $|d_E|\leq y$ grows like a power of $y$ (for some specified exponent). The groups $G$ 
are the regular Galois groups over $\Qq$ and the extensions $E/\Qq$ that we count are obtained by spe\-ci\-ali\-za\-tion from a given 
regular Galois extension $F/\Qq(T)$. The extensions $E/\Qq$ can further be prescri\-bed any unramified local behavior at each suitably large prime $p\leq \log (y)/\delta$ for some $\delta\geq 1$. 
This result is a step toward the Malle con\-jec\-tu\-re on the number of Galois extensions of given group and bounded discriminant. The local conditions fur\-ther make it a no\-ta\-ble constraint on regular Galois groups over $\Qq$.
%, related to the Grunwald problem and the Tchebotarev theorem
 %has some implications toward the Grunwald problem and the Tchebotarev density theorem. 
 The method uses the notion of self-twisted cover that we introduce.
\end{abstract}

\maketitle

%%%%%%%%%%%%%%%%%%%%%%%%%%%%%%%%%%%%%%%%%%%%%%%%%%%%%%%%%%%

\section{Main results}\label{sec:main_results}

Given a finite group $G$ and a real number $y>0$, there are only finitely many  Galois extensions $E/\Qq$ (inside a fixed algebraic closure $\overline \Qq$ of $\Qq$) of group $G$ and discriminant $|d_E|\leq y$ (Hermite's theorem). Estimating their number $N(G,y)$ is a classical topic (\S \ref{ssec:malle}). 
%This paper provides lower bounds for their number $N(G,y)$. There are classical results and conjectures on this topic (\S \ref{ssec:malle}). Here we count
%Our estimates more precisely concern 
Here we consider the extensions $E/\Qq$ obtained by specialization from a given Galois function field extension $F/\Qq(T)$ of group $G$ (\S \ref{ssec:contribution}). We obtain estimates for the number of those which satisfy the above group and ramification conditions. Our lower bound (obviously also a lower bound for $N(G,y)$) already has the conjectural growth for $N(G,y)$: a power of $y$ (\S \ref{ssec:main-statement}). Furthermore the extensions $E/\Qq$ we produce satisfy some additional local  conditions at a finite -- but growing with $y$ -- set of primes. This provides noteworthy constraints (though unknown yet not to be non-vacuous) on regular Galois groups over $\Qq$, related to analytic issues around the Tchebotarev theorem (\S \ref{ssec:grunwald-tchebotarev}). The role of the {\it self-twisted cover} from the title is explained in %\S \ref{ssec:contribution} and 
\S \ref{ssec:role-self-twisted}.

%More precisely we fix a regular Galois extension $F/\Qq(T)$ of group $G$ (where ``regular'' means that $F\cap \overline \Qq=\Qq$) and the extensions $E/\Qq$ we count are obtained by specialization from $F/\Qq(T)$ at points $t_0\in \Zz$. A main issue is to guarantee that many of the specializations are distinct when $t_0$ varies. They can further be prescri\-bed any unramified local behavior at each suitably big prime $p\leq \log (y)/\delta$, for some $0<\delta\leq 1$.

\subsection{The Malle conjecture \label{ssec:malle}{\rm is
a classical landmark in this context}} It predicts that for some constant $a(G) \in ]0,1]$, specifically defined by Malle (recalled in \S \ref{ssec:main-statement}), and for all $\varepsilon >0$, 
\vskip 1mm

\noindent
{\rm (*)} \hskip 5mm $c_1(G) \hskip 2pt y^{a(G)} \leq N(G,y) \leq c_2(G,\varepsilon) \hskip 2pt y^{a(G)+\varepsilon} \hskip 5mm \hbox{\rm for all $y>y_0(G,\varepsilon)$}$
\vskip 1,2mm
%$$c_1(G) \hskip 2pt y^{a(G)} \leq N(G,y) \leq c_2(G,\varepsilon) \hskip 2pt y^{a(G)+\varepsilon} \hskip 5mm \hbox{\rm for all $y>y_0(G,\varepsilon)$} \leqno{\hbox{\rm (*)}}$$
\noindent 
for some positive constants $c_1(G)$, $c_2(G,\varepsilon)$ and $y_0(G,\varepsilon)$  \cite{Ma}. A more precise asymptotic for $N(G,Y)$  (as $y\rightarrow \infty$) is even offered in \cite{Ma2}, namely $N(G,y) \sim c(G) \hskip 2pt y^{a(G)} \hskip 2pt  (\log(y))^{b(G)}$, for some other specified constant $b(G)\geq 0$, and an another (unspecified) constant $c(G)>0$.

The lower bound in (*) is a strong statement; it implies in particular that $G$ is the Galois group of at least one extension $E/\Qq$, 
which is an open question for many groups -- the so-called {\it Inverse Galois Problem}. Relying on the Shafarevich theorem solving the IGP for solvable groups, Kl\"uners and Malle proved the conjecture (*) for nilpotent groups \cite{kluners-malle}. 
Kl\"uners also established the lower bound for dihedral groups of order $2p$ with $p$ an odd prime \cite{kluners}.
As to the more precise asymptotic for $N(G,y)$, it has only been proved for abelian groups \cite{wright}, \cite{maki}, for $S_3$ \cite{belabas-fouvry} \cite{bhargava-wood} and for generalized quaternion groups \cite[ch.7, satz 7.6]{Klueners-habilitation}. 

%It remains open for all other groups\footnote{even though Kl\" uners produced a counter-example to a more general form of the conjecture for not necessarily Galois extensions \cite{kluners2}.}. 

We point out that there is a more general form of the conjecture for {not necessarily Galois extensions} $E/\Qq$ for which there are further significant results, notably in the case the group (of the Galois clo\-sure) is $S_n$ (with $n=[E:\Qq]$): results of Davenport-Heilbronn \cite{davenport-heilbronn} and Datskovsky-Wright
\cite{datskovsky-wright} ($n=3$), Bhargava \cite{barghava-S4}, \cite{barghava-S5} ($n=4,5$), Ellenberg-Venkatesh \cite{MR2199231} (upper bounds). There is also a counter-example to this more general form 
of the conjecture \cite{kluners2}. Finally there are quite interesting investigations on analogs  of the problem
over function fields of finite fields \cite{MR2159381}, \cite{MR2827801}, \cite{ellenberg-et-al}.

\subsection{Our specialization approach} \label{ssec:contribution} Besides solvable groups, there is another classical class of finite groups known to be Galois groups over $\Qq$: those groups $G$ which are {\it regular Galois groups} over $\Qq$, {\it i.e.}, such that there exists a Galois extension $F/\Qq(T)$ of group $G$ with $F\cap \overline \Qq = \Qq$. In addition to abelian and dihedral groups, this class includes many non solvable groups, {\it e.g.} all symmetric and alternating groups and many simple groups. We obtain for all these groups a lower bound in $y^\alpha$ for $N(G,y)$, as predicted by the Malle estimate (*). %. 

To our knowledge, this is a new step toward the conjecture. Expectedly our exponent $\alpha$ is smaller than its Malle counterpart $a(G)$: our approach only takes into account those extensions %$E/\Qq$ 
which are specializations of a geometric extension $F/\Qq(T)$.
Our result %the more precise version given in theorem \ref{thm:main-plus},
is rather a specialization result (as presented in \S \ref{sec1}). Still it is interesting to already get the right growth for $N(G,y)$ from a single extension $F/\Qq(T)$.

%More precisely we will restrict our attention to the specializations $F_{t_0}/{\Qq}$ of $F/\Qq(T)$ at points $t_0\in \Qq$ and surprisingly will already obtain a lower bound in $y^\alpha$ for the (smaller) number of those of group $G$ and discriminant $|d_{F_{t_0}}| \leq y$. 

In this geometric situation, Hilbert's irreducibility theorem classically produces ``many'' $t_0 \in {\Qq}$ such that the corresponding specialized extensions $F_{t_0}/{\Qq}$ are Galois extensions of group $G$. Beyond making more precise these ``many $t_0 \in \Qq$'' and controlling the corresponding discriminants, our goal requires a further step which is to show that many of these extensions are distinct. It is for this part that the {\it self-twisted cover}, a novel tool that we 
construct in \S \ref{sec:self-twisted}, will be used (we say more in \S \ref{ssec:role-self-twisted}).
 
%Whether solvable groups are in this class is an open question.
%These groups are automatically Galois groups over $\Qq$ (Hilbert's irreducibility theorem).
%Abelian groups are also included but for nilpotent groups, the questions is still open.  

%Furthermore we have $\alpha(G,\delta) \leq 1/\delta(G) \leq a(G)$ %$\alpha(G,\delta)\leq a(G)$
%and  so our result is compatible with Malle's conjecture. 

%With this local condition, our statement seems to be new even for cyclic groups $\Zz/n\Zz$ and $n\geq 5$.
% and has noteworthy implications toward some analytic issues around the Grunwald problem and the Tchebotarev theorem (see \S \ref{ssec:grunwald-tchebotarev}).

\subsection{Statement of the main result} \label{ssec:main-statement} 
In addition to being of group $G$ \hbox{and discriminant $\leq y$}, we will be able to prescribe their local behavior at many primes to the Galois extensions $E/\Qq$ that we will produce.
%we count can be prescribed any unramified behavior at every large prime $p \leq \log(y)/\delta$ (for some $\delta>1$). 
The following notation helps phrase these ``local conditions''.

%Malle had suggested that his estimates should hold with some local conditions \cite[Remark 1.2]{Ma2}. 

%Ours however have a set of primes growing with $y$, which provides noteworthy constraints on regular Galois groups, related to the Grunwald problem and the Tchebotarev theorem (\S \ref{ssec:grunwald-tchebotarev}).

Given a finite group $G$, a finite set $S$ of primes and for each $p\in S$, a subset ${\mathcal F}_p\subset G$ con\-si\-sting of a non-empty union of conjugacy classes of $G$, the collection 
${\mathcal F}= ({\mathcal F}_p)_{p\in S}$ is called a {\it Frobenius data} for $G$ 
on $S$.
The number of Galois extensions $E/\Qq$ of group $G$, of discriminant $|d_E| \leq y$ and which are unramified with Frobenius ${\rm Frob}_p(E/\Qq) \in {\mathcal F}_p$ ($p\in S$) is denoted by $N(G,y,{\mathcal F})$.

The parameter $\delta(G)$ that appears below in the definition of our exponent is the {\it minimal affine branching index} of regular realizations of $G$ over $\Qq$, %; it is defined right after theorem \ref{thm:main}. 
{\it i.e.}, 
the minimal degree of the discriminant $\Delta_P(T)$ of a polynomial $P\in \Qq[T,Y]$, monic in $Y$,  
such that $\Qq(T)[Y]/\langle P\rangle$ is a regular Galois extension of $\Qq(T)$ of group $G$. 

\begin{theorem} \label{thm:main} Let $G$ be a regular Galois group over $\Qq$, non trivial. There exists a constant $p_0(G)$ with the following property. For every $\delta >\delta(G)$, for every suitably large $y$ (depending on $G$ and $\delta$) and every Frobenius data ${{\mathcal F}_y}$ on ${\mathcal S}_y=
\{p_0(G) < p \leq \log (y)/\delta\}$, we have
$$ N(G,y,{\mathcal F}_y) \geq \hskip 2pt y^{\alpha(G,\delta)}\hskip 8mm \hbox{\it with}\hskip 3mm \alpha(G,\delta)= (1-1/|G|)/\delta.$$ 

\noindent
Furthermore, the desired extensions $E/\Qq$ can be obtained by specializing some regular realization $F/\Qq(T)$ of $G$.
\end{theorem}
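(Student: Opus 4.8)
The plan is to start from a fixed regular Galois extension $F/\Qq(T)$ realizing $G$ whose affine branching index is as small as possible, i.e. coming from a polynomial $P\in\Qq[T,Y]$ monic in $Y$ with $\deg\Delta_P(T)$ arbitrarily close to $\delta(G)$; fix $\delta>\delta(G)$ so that $\deg\Delta_P \le \delta$ after a small adjustment. The first step is a discriminant bound: for an integer (or suitably $p$-integral) specialization value $t_0$, the specialized field $F_{t_0}/\Qq$ has $|d_{F_{t_0}}|$ bounded by roughly $|t_0|^{c}$ with $c$ controlled by $\deg\Delta_P$ — essentially because the primes ramifying in $F_{t_0}/\Qq$ (other than a fixed finite bad set) divide $\Delta_P(t_0)$, and the exponents to which they occur are bounded by a constant depending only on $G$. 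Thus demanding $|d_{F_{t_0}}|\le y$ amounts to allowing $t_0$ to range over an interval of length essentially $y^{1/\deg\Delta_P}\ge y^{1/\delta}$.

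The second step is to impose the local conditions and the Galois-group condition simultaneously. At each prime $p\in\mathcal S_y$ we need $F_{t_0}/\Qq$ to be unramified with $\mathrm{Frob}_p\in\mathcal F_p$; since $\mathcal F_p$ is a non-empty union of conjugacy classes and contains in particular some element, the "unramified with prescribed Frobenius" requirement at $p$ is a congruence condition on $t_0$ modulo $p$ (or a small power of $p$): by the good-reduction/Beckmann–Black type analysis of specializations, there is for each $p\notin$(bad set) a positive-density set of residues $t_0\bmod p$ for which $F_{t_0}/\Qq$ is unramified at $p$ with $\mathrm{Frob}_p$ lying in any prescribed conjugacy class of $G$ (this is where $p_0(G)$ is chosen, to discard the finitely many primes of bad reduction and the primes dividing $|G|$ or the relevant different). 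Running over all $p\in\mathcal S_y$ and using that $\prod_{p\le \log(y)/\delta} p = y^{o(1)}$, the combined congruence cuts the admissible interval for $t_0$ down only by a factor $y^{o(1)}$, so we still have $\gg y^{1/\delta - o(1)}$ admissible values, all giving $F_{t_0}/\Qq$ of group $G$ (Hilbert irreducibility, made effective, costs at most another $y^{o(1)}$) with the prescribed local behavior.

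The third and crucial step is to show that among these $\gg y^{1/\delta-o(1)}$ specialization points the number of \emph{distinct} fields $F_{t_0}$ is still $\ge y^{\alpha(G,\delta)}$ with $\alpha(G,\delta)=(1-1/|G|)/\delta$. The naive bound "each field arises from at most $|G|$ values $t_0$" is false in general — a single number field can be a specialization at infinitely many parameter values — so one must bound the fibers of the map $t_0\mapsto F_{t_0}$. This is exactly where the \emph{self-twisted cover} enters: collisions $F_{t_0}\cong F_{t_1}$ are governed by rational points on the twisted covers $\tilde F^\tau/\Qq$ (for $\tau$ ranging over a set of representatives, here packaged into one geometric object, the self-twisted cover), and one shows that the set of $t_0$ giving a fixed field is contained in the union of the $\Qq$-point sets of finitely many twisted covers, each of which — being a cover of $\Pp^1$ of genus/gonality large enough, or by a counting estimate on its rational points of bounded height — contributes only $O(y^{(1/|G|)/\delta})$ values up to height $y^{1/\delta}$. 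Dividing the $y^{1/\delta-o(1)}$ admissible $t_0$ by this fiber bound yields $\ge y^{(1-1/|G|)/\delta - o(1)}$ distinct fields, and absorbing the $o(1)$ into the "suitably large $y$" hypothesis gives the clean exponent $\alpha(G,\delta)$.

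The main obstacle is precisely this last step: establishing that the self-twisted cover has few enough rational points of bounded height so that the fiber of $t_0\mapsto F_{t_0}$ has size only $O(y^{(1/|G|)/\delta})$. Controlling the height contribution requires an honest geometric study of the self-twisted cover (its components, their degrees over the $T$-line, and a point-count of the shape $\#\{x : H(x)\le B\} = O(B^{1/d})$ for a degree-$d$ cover, or a genus bound invoking a Faltings/Heath-Brown type input) — this is the technical heart of the paper and the reason the self-twisted cover is introduced. The remaining ingredients (discriminant estimate, congruence bookkeeping over $\mathcal S_y$, effective Hilbert irreducibility, choice of $p_0(G)$) are comparatively routine given the standard theory of specializations of Galois covers.
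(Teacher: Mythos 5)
Your proposal follows essentially the same route as the paper: bound $|d_{F_{t_0}}|$ by a power of $t_0$ via the discriminant $\Delta_P$, realize each local condition as a positive-density congruence condition on $t_0$ modulo the corresponding good prime (the paper does this with the twisting lemma over $\Qq_p$ plus Lang--Weil), and control collisions $F_{t_0}\cong F_{u_0}$ by counting integral points on the fiber-twisted covers, with the self-twisted cover supplying the uniform height bound needed to apply a Heath-Brown/Walkowiak-type count with exponent $1/|G|$ in the height. Two slips in your bookkeeping, neither of which is a gap in the approach. First, $\prod_{p\le \log(y)/\delta}p$ is roughly $y^{1/\delta}$, not $y^{o(1)}$ (since $\log \Pi(x)\sim x$); what actually makes the local conditions cheap is that $\mathcal S_y$ contains only $O(\log y/\log\log y)$ primes and each costs a bounded constant factor of density, so the total loss is $y^{-o(1)}$ --- your positive-density-per-prime statement is exactly the input needed, and the paper accordingly works with all the admissible residue classes in $[1,\Pi(\mathcal S_y)]$ rather than a single one. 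Second, a bound of the form $y^{\alpha-o(1)}$ does not become $y^{\alpha}$ merely by taking $y$ large; the paper absorbs all $y^{o(1)}$ losses into the strict gap $\delta(G)<\delta$ by running the argument with an intermediate exponent $\delta^-\in\ ]\delta(G),\delta[$, which is the room your ``small adjustment'' of $\deg\Delta_P$ must be understood to leave. (Minor difference: for the condition $\Gal(F_{t_0}/\Qq)=G$ the paper does not invoke effective Hilbert irreducibility but gets it for free from Frobenius conditions at a fixed auxiliary set of primes together with Jordan's lemma; your route would also work but requires checking that the Hilbert exceptional set is smaller than the admissible set.)
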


\S \ref{sec1} says more on $\delta(G)$. If a regular reali\-za\-tion $F/\Qq(T)$ of $G$ is gi\-ven by a polynomial $P\in \Qq[T,Y]$, monic in $Y$, then $\delta(G) < 2\hskip 1pt |G| \deg_T(P)$. One can then take $\delta=2 |G| \deg_T(P)$ in theorem \ref{thm:main} or the more intrinsic value $\delta = 3 \hskip 1pt r  \hskip 1pt |G|^3 \log|G|$  with $r$ the branch point number of $F/\Qq(T)$. By comparison, Malle's exponent is $a(G)= (|G| (1- 1/\ell))^{-1}$ where $\ell$ is the smallest prime divisor of $|G|$; inequality $a(G) \geq 1/\delta(G)$ is proved in general in lemma \ref{lem:exponent}, following a suggestion of G. Malle.

%The parameter $\delta(G)$ is 
%the minimal degree of the discriminant $\Delta_P(T)$ of a polynomial $P\in \Qq[T,Y]$, monic in $Y$,  
%such that $\Qq(T)[Y]/\langle P\rangle$ is a regular Galois extension of $\Qq(T)$ of group $G$. 
%\S \ref{sec1} says more on $\delta(G)$. If a regular reali\-za\-tion $F/\Qq(T)$ of $G$ is gi\-ven by a polynomial $P\in \Qq[T,Y]$, monic in $Y$, then $\delta(G) < 2\hskip 1pt |G| \deg_T(P)$ and so one can take $\delta=2 |G| \deg_T(P)$ in theorem \ref{thm:main} or the more intrinsic value $\delta = 3 \hskip 1pt r  \hskip 1pt |G|^3 \log|G|$  with $r$ the branch point number of $F/\Qq(T)$.% can also be used.
%Furthermore we have $\alpha(G,\delta) \leq 1/\delta(G) \leq a(G)$ %$\alpha(G,\delta)\leq a(G)$
%and  so our result is compatible with Malle's conjecture. 

%We further compare our exponent $\alpha(G,\delta)$ and Malle's exponent $a(G)$: when $G$ is nilpotent, we have $a(G) \geq 1/\delta(G) > \alpha(G,\delta)$ (lemma \ref{lem:exponent})
%and  so our result is compatible with Malle's conjecture in this case. It is unclear whether $a(G) \geq 1/\delta(G)$ holds in general. 

A more precise form of theorem \ref{thm:main} is stated in \S \ref{sec1} which starts from any given regular reali\-za\-tion $F/\Qq(T)$ of $G$ and which shows other features of our result: ramification can also be prescribed at any finitely many suitably large primes under the assumption that $F/\Qq(T)$ has at least one $\Qq$-rational branch point; 
the exponent $\alpha(G,\delta)$ can be replaced by $1/\delta$ under Lang's conjecture; the Hilbert irreducibility aspect is expanded; and there is an upper bound part; see theorem \ref{thm:main-plus}.

\begin{remark}  Extending theorem \ref{thm:main} (and theorem \ref{thm:main-plus}) to arbitrary number fields (instead of $\Qq$) seems to present no major obstacles, only
requiring to extend some ingredients we use, which are only available in the literature over $\Qq$ but should hold over number fields. As each finite group is known to be a regular Galois group over some suitably big number field, we could deduce that the same is true for the lower bound part in the Malle conjecture: {\it given any finite group, there is a number field $k_0$ such that a lower bound like in {\rm (*)} (appropriately generalized) holds over every number field containing $k_0$.} 
\end{remark}

\subsection{On the local conditions} \label{ssec:grunwald-tchebotarev}  (a) Regarding this aspect, theorem \ref{thm:main} im\-proves on our previous work, with N. Ghazi, about the {Grunwald problem}.
From \cite{DEGha}, if $G$ is a regular Galois group over $\Qq$, then every {\it unramified Grunwald problem} for $G$ at some finite set ${\mathcal S}$ of primes $p\geq p_0(G)$ can be solved, {\it i.e.} every collection of unramified extensions $E^p/\Qq_p$ of group $H_p\subset G$ ($p\in {\mathcal S}$) is induced by some Galois extension $E/\Qq$ of group $G$.  Theorem \ref{thm:main} does more: it provides, for every given discriminant size, a big number of such extensions $E/\Qq$, a number that grows as in Malle's predictions.% --- as a positive power of the discriminant size.
\vskip 1mm

Malle had suggested that his estimates should hold with some local conditions \cite[Remark 1.2]{Ma2}. However, unlike his, ours have a set of primes, ${\mathcal S}_y$, which grows with $y$. We focus below on this. 
%difference.% in the second part of this subsection.
\vskip 1mm

\noindent
(b) First we note that the set of primes where 
the local be\-ha\-vior can be prescribed as in theorem \ref{thm:main} cannot be expected to be much bigger than the set ${\mathcal S}_y$:
\vskip 0,5mm

\noindent
-  
indeed, that every possible Frobenius data on ${\mathcal S}_y$ occurs in at least one extension $E/\Qq$ counted by $N(G,y)$ already gives $N(G,y)\geq c^{u(y)}$, with $c$ the number of conjugacy classes of $G$ and $u(y)$ the number of primes in ${\mathcal S}_y=\{p_0(G) < p \leq \log (y)/\delta\}$. Now $c^{u(y)}$ compares to the conjectural upper bounds for $N(G,y)$: $\log c^{u(y)} \sim\log (y)/ \log(\log y)$ and $\log(y^{a(G)+\varepsilon}) \sim \log y$ (up to multiplicative constants). 
\vskip 0,5mm

\noindent
- the restriction that the primes $p$ be suitably large ($p>p_0(G)$) cannot be removed either as the famous Wang's counter-example shows \cite{wang}: no Galois extension $E/\Qq$ of group $\Zz/8\Zz$ is unramified at $2$ with Frobenius of order $8$. Other counter-examples with other primes than $2$ have been recently produced by Neftin \cite{Neftin}.
\vskip 1mm

%In the second part of this subsection, we show some consequences of the local condition aspect of our result, and particularly of the fact that the set of primes ${\mathcal S}_y$ grows with $y$.

%which provides noteworthy constraints on regular Galois groups, related to the Grunwald problem and the Tchebotarev theorem (\S \ref{ssec:grunwald-tchebotarev}).

\noindent
(c) There is a further connection of our result with the Tchebotarev density theorem. The following definition helps explain it. 

\begin{definition} \label{def:tche}
Given a real number $\ell \geq 0$, we say that a finite group $G$ is of {\it Tchebotarev exponent} $\leq \ell$, which we write ${\rm tch}(G)\leq \ell$, if there exist real numbers $m,\delta >0$ such that for every $x>m$ and every {\it Frobenius data} ${\mathcal F}_x= ({\mathcal F}_p)_{m<p\leq x}$ for $G$, there exists {\it at least one} Galois extension $E/\Qq$ of group $G$ such that these two conditions hold:

\noindent 1. for each $m<p\leq x$, $E/\Qq$ is unramified and ${\rm Frob}_p(E/\Qq) \in {\mathcal F}_p$,

\noindent 2. $\log |d_E| \leq \delta x^\ell$.
\end{definition}

Fix $\delta >\delta(G)$ and $m$ suitably large (in particular $m\geq p_0(G)$). 
Theorem \ref{thm:main} for $y=e^{\delta x}$ with $x>m$ provides many\footnote{at least $e^{\gamma x}$ with $\gamma=1-1/|G|$ (for $x\gg 1$).} extensions $E/\Qq$ satisfying  conditions of definition \ref{def:tche} with $\ell = 1$. 

\begin{corollary} \label{cor:tch(G)}
If a finite group $G$ is a regular Galois group over $\Qq$, then ${\rm tch}(G) \leq 1$.
\end{corollary}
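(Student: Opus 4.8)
The plan is to derive Corollary \ref{cor:tch(G)} directly from Theorem \ref{thm:main} essentially by an exponential change of variables, matching the discriminant bound $\log|d_E|\leq y$ of the theorem (written in the shape $N(G,y,\mathcal F_y)\geq y^{\alpha(G,\delta)}$) against the bound $\log|d_E|\leq \delta x^\ell$ appearing in Definition \ref{def:tche}. Since we only need \emph{existence} of one extension, the precise power-of-$y$ lower bound is far stronger than required; the whole content is bookkeeping of the size of the prime set and of the discriminant.

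First I would fix, once and for all, a regular realization $F/\Qq(T)$ of $G$ (which exists by hypothesis) and a value $\delta>\delta(G)$; this also fixes the constant $p_0(G)$ supplied by Theorem \ref{thm:main}. Set $m=p_0(G)$ (enlarging $m$ if needed so that ``$y$ suitably large'' in the theorem is guaranteed whenever $x>m$). Now, given $x>m$ and a Frobenius data $\mathcal F_x=(\mathcal F_p)_{m<p\leq x}$, put $y=e^{\delta x}$. Then $\log(y)/\delta = x$, so the prime set $\mathcal S_y=\{p_0(G)<p\leq \log(y)/\delta\}$ of the theorem is exactly $\{m<p\leq x\}$, the index set of $\mathcal F_x$. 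Hence $\mathcal F_x$ \emph{is} a Frobenius data on $\mathcal S_y$ in the sense of the theorem, and Theorem \ref{thm:main} yields $N(G,y,\mathcal F_x)\geq y^{\alpha(G,\delta)}$, which for $x$ (equivalently $y$) large is $\geq 1$ — indeed at least $e^{\gamma x}$ with $\gamma=1-1/|G|$, as noted in the footnote. In particular there is at least one Galois extension $E/\Qq$ of group $G$ that is unramified at every prime $p$ with $m<p\leq x$ and has $\mathrm{Frob}_p(E/\Qq)\in\mathcal F_p$ there; this is condition 1 of Definition \ref{def:tche}. Moreover such $E$ is counted by $N(G,y,\mathcal F_x)$, so $|d_E|\leq y=e^{\delta x}$, i.e.\ $\log|d_E|\leq \delta x$, which is condition 2 with $\ell=1$. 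Since the same $m$ and $\delta$ work for all $x>m$ and all Frobenius data, this is exactly the assertion $\mathrm{tch}(G)\leq 1$.

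There is essentially no obstacle here: the only point requiring a little care is the quantifier bookkeeping in Definition \ref{def:tche} versus Theorem \ref{thm:main}. The definition asks for \emph{fixed} $m,\delta$ that work uniformly in $x$ and in the Frobenius data, whereas the theorem's ``suitably large $y$'' depends on $G$ and $\delta$ only (not on $\mathcal F_y$); so it suffices to absorb that threshold into the choice of $m$, after which the reduction is uniform. I would also remark that the constant $\delta$ in Definition \ref{def:tche} plays the role of the constant $\delta>\delta(G)$ from Theorem \ref{thm:main}, so the corollary in fact gives a bit more: $\mathrm{tch}(G)\leq 1$ can be witnessed with any $\delta>\delta(G)$, and with many — not just one — such extensions $E/\Qq$.
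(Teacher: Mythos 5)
Your proof is correct and follows essentially the same route as the paper, which also substitutes $y=e^{\delta x}$ into Theorem \ref{thm:main} and reads off conditions 1 and 2 of Definition \ref{def:tche} with $\ell=1$ (the footnote's $e^{\gamma x}$ count included). The only cosmetic point is that if you enlarge $m$ beyond $p_0(G)$ you should extend the given Frobenius data arbitrarily (e.g.\ by ${\mathcal F}_p=G$) to the primes in $]p_0(G),m]$ so that it becomes a Frobenius data on all of ${\mathcal S}_y$, exactly as the paper implicitly does.
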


On the other hand there is a universal lower bound for ${\rm tch}(G)$. Some famous estimates on the Tchebotarev theorem \cite{LaMoOd} (see also \cite{Lagarias-Odlyzko}, \cite{Se_Cebotarev}) show that, under the General Riemann Hypothesis, for every finite group $G$, we have 
$$ {\rm tch}(G) >  (1/2) - \varepsilon,\  \hbox{\rm for every}\ \varepsilon >0.\  \footnote{\cite{LaMoOd} also has an unconditional conclusion, which, using our terminology, leads to ${\rm tch}(G) \geq (\log \log x)/(2\log x)$ (with definition \ref{def:tche} extended to allow $\ell$ to be a function of $x$).} \leqno\hbox{\rm (**)}$$
\vskip 1mm
\noindent 
(More precisely, they show that if a Galois extension $E/\Qq$ is of group $G$ and $\log|d_E| \leq x^{1/2}/\log x$, there are at least $\pi(x) - 2x/(|G| \log x)$ non totally split primes $p\leq x$ in $E/\Qq$ (with $\pi(x)$ is the number of primes $p\leq x$). As $\pi(x) - 2x/(|G| \log x) \rightarrow +\infty$, the trivial totally split behavior --- ${\mathcal F}_p = \{1\}$ for each $m < p \leq x$ --- does not occur if $x\gg 1$).
\vskip 1mm

\hskip -1,5mm Corollary \ref{cor:tch(G)} raises the question of whether ${\rm tch}(G)>1$ for some group $G$, in which case $G$ could not be a regular Galois group over $\Qq$. Such a group may not exist (if the so-called Regular Inverse Galois Problem is true), while at the other extreme it cannot be ruled out at the moment that ${\rm tch}(G) =\infty$ for some group $G$. Many possibilities exist in between for Galois groups $G$ over $\Qq$: that realizations exist that satisfy the local conditions of definition \ref{def:tche}  (1) or not, that
the corresponding discriminants can be bounded as in definition \ref{def:tche} (2), for some $\ell \in  [1/2,\infty[$ or not. %  (in which case ${\rm tch}(G) = \infty$ and $G$ is not a regular Galois group over $\Qq$).
%Even without questioning its being a Galois group over $\Qq$: for example realizations $E_p/\Qq$, totally split at $p$, could exist for all but finitely many primes $p$ --- a strong form of IGP ---, but only with discriminants exceeding the bounds from definition \ref{def:tche}. % still that ${\rm tch}(G) = \infty$.
%, and  so $G$ is not a regular Galois group over $\Qq$. 
Somehow the Tchebotarev exponent provides a measure of the gap (possibly empty) between the {\it classical} and {\it regular} Inverse Galois Problems.

Gaining information on Tchebotarev exponents however seems difficult.
Even for $G=\Zz/2\Zz$ and the case of the totally split behavior, for which the problem amounts to bounding the least square-free 
integer $d_m(x)$ that is a quadratic residue modulo each prime $m<p\leq x$. Chan-ging $1/2$ to $1$ in (**), the remaining possible improvement (as $\Zz/2\Zz$ is a regular Galois group over $\Qq$), is plausible as some easy heuristics show 
%(based on the fact that the chances that an integer is a quadratic residue modulo a prime is $1/2$)
% should be close to $(1/2)^{x/\log x}$ and then $\log(d_m(x)$  should be close to $((1/2)^{x/\log x})^{-1}$)) 
but relates to deep questions in number theory ({\it e.g.} \cite[\S 2.5]{Se_Cebotarev}).

\subsection{Role of the self-twisted cover} \label{ssec:role-self-twisted}
Our method starts with a regular realization $F/\Qq(T)$ of $G$. The extensions $E/\Qq$ that we wish to produce will be specializations $F_{t_0}/\Qq$ of it at some integers $t_0$. A key tool is the twisting lemma from \cite{DEGha}, which reduces the search of specializations of a given type to that of rational points on a certain twisted cover. We use it twice, first over $\Qq_p$ as in \cite{DEGha}, to construct specializations $F_{t_0}/\Qq$ with a specified local behavior. A main ingredient for this first stage is the Lang-Weil estimate for the number of rational points on a curve over a finite field. We obtain many good specialisations $t_0\in \Zz$ and a lower bound for their number. 

The next question is to bound the number of the corresponding specializations $F_{t_0}/\Qq$ that are non-isomorphic. First we reduce it to counting integral points of a given size on 
certain twisted covers. This is our second use of the twisting lemma, over $\Qq$ this time. For the count of the integral points, we use a result of Walkowiak \cite{Wa}\footnote{We need to slightly improve Walkowiak's result to get the right exponent $\alpha(G,\delta)$ in theorem \ref{thm:main}; see \S \ref{ssec:walkowiak}.}, based on a method of Heath-Brown \cite{HB}. However the bounds from \cite{Wa} involve the height of the defining polynomials, which here depend on the specializations $F_{t_0}/\Qq$. We have to control the dependence in $t_0$. This is where enters the self-twisted cover, which as we will see, is a family of covers, depending only on the original extension $F/\Qq(T)$ and which has all the twisted covers among its fibers. As a result, a bound of the form $c_1 t_0^{c_2}$ for the height of the polynomials above will follow with $c_1$ and $c_2$ depending only on $F/\Qq(T)$.

\vskip 2mm 
In \S 2 below we present theorem \ref{thm:main-plus}, the more precise version of theorem \ref{thm:main}. \S 3 is the construction of the self-twisted cover. \S 4 gives the proof of theorem \ref{thm:main-plus}.

\vskip 2mm
\noindent 
{\bf Acknowledgement.} I am grateful to G. Malle and J. Kl\"uners for their interest in the paper and some valuable suggestions. 

\penalty -1300
\section{The specialization version of the result}\label{sec1}

\subsection{Basics}  \label{ssec:basics}
Given a field $k$, we work without distinction with a regular extension $F/k(T)$ or with the associated $k$-cover $f:X\rightarrow \Pp^1$: $f$ is the normalization of $\Pp^1_k$ in $F$ and $F$ is the function field $k(X)$ of $X$. 
%over $k$. 

We assume that $k$ is of characteristic $0$; $k=\Qq$ in most of the paper.

Recall that a {\it (regular) $k$-cover of $\Pp^1$} is a finite and generically unramified morphism $f:X \rightarrow \Pp^1$ defined over $k$ with $X$ a normal and geometrically irreducible variety. The $k$-cover $f:X \rightarrow \Pp^1$ is said to be {\it Galois} if the field extension $k(X)/k(T)$ is Galois; if in addition $f:X\rightarrow \Pp^1$ is given together with an isomorphism $G\rightarrow \Gal(k(X)/k(T))$, it is called a (regular) $k$-G-{\it Galois cover} of group $G$. 

By {\it group} and {\it branch point set} of a $k$-cover $f$, we mean 
those of the $\overline k$-cover  $f\otimes_k\overline k$: the group of a $\overline k$-cover $X\rightarrow \Pp^1$ is the  Galois group of the Galois closure of the extension $\overline k(X)/\overline k(T)$. 
The branch point set of $f\otimes_k\overline k$ is the (finite) set of points $t\in \Pp^1(\overline k)$ such that the associated discrete valuations are ramified in the extension $\overline k(X)/\overline k(T)$.

Given a regular Galois extension $F/k(T)$ and $t_0\in {\mathbb P}^1(k)$, the {\it specialization of $F/\Qq(T)$ at $t_0$} is the residue extension of an (arbitrary) prime above $\langle T-t_0\rangle$ in the integral closure of $\Qq[T]_{\langle T-t_0\rangle}$ in $F$ (as usual use $\Qq[1/T]_{\langle 1/T\rangle}$ instead if $t_0=\infty$). We denote it by $F_{t_0}/k$.

Given a regular Galois extension $F/k(T)$, we say a prime $p$ is {\it good} for $F/\Qq(T)$ if $p\not|\hskip 1mm |G|$, the branch divisor ${\bf t}=\{t_1,\ldots,t_r\}$ is \'etale at $p$ and there is no vertical ramification at $p$; and it is {\it bad} otherwise. We refer to \cite{DEGha} for the precise definitions. We only use here the standard fact that there are only finitely many bad primes.

\subsection{The minimal affine branching index $\delta(G)$}\label{ssec:exponent} 
Given a regular extension $F/\Qq[T]$, we call the irreducible polynomial $P(T,Y)$ of a primitive element, integral over $\Zz[T]$, an {\it integral affine model} of $F/\Qq(T)$;  $P(T,Y)\in \Zz[T]$ and is monic in $Y$. Denote the discriminant of $P$ relative to $Y$ by $\Delta_P(T)\in \Zz[T]$ and its degree by $\delta_P$. The minimal degree $\delta_P$ obtained in this manner is called the {\it minimal affine branching index of $F/\Qq(T)$} and denoted by $\delta_{F/\Qq(T)}$. For any integral affine model $P(T,Y)$ of $F/\Qq(T)$, we have
$$ \delta_{F/\Qq(T)} \leq \delta_P < 2 |G| \deg_T(P).$$

If $G$ is 
a regular Galois group over $\Qq$, the parameter $\delta(G)$ involved in theorem \ref{thm:main} is the minimum of all $\delta_{F/\Qq[T]}$ with $F/\Qq[T]$ running over all regular realizations of $G$.

\penalty -1200
\begin{lemma}\label{lem:exponent} Let $G$ be a non trivial regular Galois group over $\Qq$.%,  and $P(T,Y)$ be an integral affine model of $F/\Qq(T)$.
\vskip 1mm

\noindent
{\rm (a)} If $F/\Qq(T)$ is a regular realization of $G$ with $r$ branch points and $g$ is the genus of $F$, then
%$$ \delta(G) \leq \delta_P < 2 |G| \deg_Y(P) \leqno\hbox{\rm (a)}$$
$$\delta(G) < 3 \hskip 1pt (2g+1) |G|^2 \log|G| \leq  3 \hskip 1pt r  \hskip 1pt |G|^3 \log|G|.$$
\vskip 1mm

\noindent
{\rm (b)} Furthermore we have \hskip 5mm  $\displaystyle \delta(G) \geq 1/a(G).$
%$$\displaystyle a(G) \geq 1/\delta(G).$$
\end{lemma}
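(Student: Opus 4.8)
The plan is to sandwich $\delta(G)$ between the $T$-degree of a conveniently chosen model, for the upper bound (a), and the contribution of the geometric ramification to the discriminant of \emph{any} model, for the lower bound (b).

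\emph{Upper bound {\rm (a)}.} Let $X$ be the smooth projective geometrically irreducible $\Qq$-curve attached to $F$, of genus $g$, and $f:X\to\Pp^{1}$ the associated cover of degree $|G|$. Put $N=\lfloor 2g/|G|\rfloor+1$ and $D=N\,f^{*}(\infty)$, a $G$-invariant divisor of degree $N|G|>2g$; by Riemann--Roch $\dim_{\Qq}L(D)=N|G|-g+1\geq 2$. First I would show that a general $y\in L(D)$ is a primitive element of $F/\Qq(T)$. Such a $y$ fails to be primitive exactly when it is fixed by some $\rho\in G$ of prime order, i.e.\ when it lies in one of the finitely many $\Qq$-subspaces $L(D)\cap F^{\langle\rho\rangle}$; each of these is \emph{proper}, being the Riemann--Roch space on $X/\langle\rho\rangle$ attached to the pushforward of $D$, of dimension $N|G|/\mathrm{ord}(\rho)-g(X/\langle\rho\rangle)+1<N|G|-g+1$ (since $g(X/\langle\rho\rangle)\leq g$ and $N|G|(1-1/\mathrm{ord}(\rho))\geq N|G|/2>g$). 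As $\Qq$ is infinite, a finite union of proper subspaces cannot exhaust $L(D)$, so such $y$ exists. After rescaling $y$ by a suitable nonzero integer --- which preserves primitivity, makes it integral over $\Zz[T]$, and leaves the $T$-degree of its minimal polynomial $P$ unchanged --- the polynomial $P$ is an integral affine model of $F/\Qq(T)$. Its coefficients are elementary symmetric functions of the $|G|$ conjugates of $y$, all lying in $L(D)$, hence in $L(|G|D)\cap\Qq[T]$; comparing pole orders at $\infty$ gives $\deg_{T}P\leq N|G|\leq 2g+|G|$. Combined with the inequality $\delta(G)\leq\delta_{P}<2|G|\deg_{T}(P)$ recalled in \S\ref{ssec:exponent}, this yields $\delta(G)<2|G|(2g+|G|)$, which for $|G|\geq 2$ is smaller than $3(2g+1)|G|^{2}\log|G|$. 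The second inequality of (a) is then immediate from Riemann--Hurwitz: $2g-2\leq -2|G|+r(|G|-1)$ forces $r\geq 2$ and $2g+1<r|G|$, hence $(2g+1)|G|^{2}\leq r|G|^{3}$.

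\emph{Lower bound {\rm (b)}.} Let $P$ be any integral affine model of any regular realization of $G$; base change to $\overline{\Qq}$, which changes neither $\delta_{P}$ nor the group, and let $B$ be the integral closure of $\overline{\Qq}[T]$ in the function field. As $\overline{\Qq}[T][y]\subseteq B$, the discriminant $\Delta_{P}$ and $\mathrm{disc}(B/\overline{\Qq}[T])$ differ by the square of the module index $[B:\overline{\Qq}[T][y]]$, so $\delta_{P}=\deg\Delta_{P}\geq\deg\mathrm{disc}(B/\overline{\Qq}[T])$. In characteristic $0$ the latter discriminant is the norm of the tame different and, all residue degrees over $\overline{\Qq}$ being $1$, it has degree $\sum_{\bar t_{j}\neq\infty}(|G|/e_{j})(e_{j}-1)$, summed over the affine geometric branch points $\bar t_{j}$ of ramification index $e_{j}$. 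There is at least one such point, since every regular realization has $r\geq 2$ branch points (Riemann--Hurwitz, as in (a)) while at most one of them equals $\infty$. For that point $e_{j}$ is the order of a nontrivial element of $G$, hence $e_{j}\geq\ell$, the least prime divisor of $|G|$, and it already contributes $|G|(1-1/e_{j})\geq|G|(1-1/\ell)=1/a(G)$. Therefore $\delta_{P}\geq 1/a(G)$ for every model of every realization, i.e.\ $\delta(G)\geq 1/a(G)$.

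The only step that is more than bookkeeping is the primitivity argument in (a): one genuinely needs the strict inequality $\dim(L(D)\cap F^{\langle\rho\rangle})<\dim L(D)$ for every prime-order $\rho\in G$, which is exactly what forces $N>2g/|G|$ and hence the leading term $2g$ in the bound for $\deg_{T}P$. Everything else rests only on Riemann--Roch, the discriminant/different relation, and Riemann--Hurwitz.
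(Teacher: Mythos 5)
Your proof is correct, and the two halves relate to the paper differently. For part (b) you follow essentially the paper's route: bound $\delta_P$ from below by the degree of the discriminant of the integral closure, compute that via the (tame) different, and use that at least one affine branch point exists with ramification index $e\geq \ell$, giving the contribution $|G|(1-1/\ell)=1/a(G)$; the only cosmetic difference is that you base-change to $\overline\Qq$ so all residue degrees are $1$, whereas the paper works over $\Qq[T]$ and sums $f_{\mathcal P}(e_{\mathcal P}-1)$ over primes -- the same computation. For part (a), however, the paper simply cites a result of Sadi producing an affine model with $\deg_T(P)\leq (2g+1)|G|\log|G|/\log 2$ and feeds it into $\delta_P<2|G|\deg_T(P)$, while you construct the model directly: a general element of $L(Nf^*(\infty))$ with $N|G|>2g$ is primitive (each $L(D)\cap F^{\langle\rho\rangle}$ is a proper subspace, and $\Qq$ is infinite), and the symmetric functions of its conjugates are polynomials in $T$ of degree $\leq N|G|\leq 2g+|G|$. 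This is self-contained and in fact sharper, giving $\delta(G)<2|G|(2g+|G|)$ with no $\log|G|$ factor, from which the stated bound follows for $|G|\geq 2$ as you check. One small remark: your assertion that $\dim\bigl(L(D)\cap F^{\langle\rho\rangle}\bigr)$ \emph{equals} $N|G|/\mathrm{ord}(\rho)-g(X/\langle\rho\rangle)+1$ needs $\deg$ of the pushed-down divisor to exceed $2g(X/\langle\rho\rangle)-2$ (which does hold by Riemann--Hurwitz), but for properness the trivial bound $\dim L(D')\leq\deg D'+1\leq N|G|/2+1<N|G|-g+1$ already suffices, so there is no gap.
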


\begin{proof}[Proof of lemma \ref{lem:exponent}] (a) The first inequality follows from a result of Sadi \cite[\S 2.2]{SadiT} which provides an affine model $P(T,Y)$ of $F/\Qq(T)$ such that 
$$ \deg_T(P) \leq (2g+1) |G| \log|G| /\log 2.$$ 
\noindent
The second inequality follows from the Riemann-Hurwitz formula.  
\vskip 1mm

(b) Let $F/\Qq(T)$ be a regular realization of $G$, $d_F\in \Qq[T]$ be the absolute discriminant of $F/\Qq(T)$ (the discriminant of a $\Qq[T]$-basis of the integral closure of $\Qq[T]$ in $F$) and $P(T,Y)$ be an integral affine model of $F/\Qq(T)$. Inequality (b) follows from  the following ones: 
$$\delta_P \geq \deg(d_F) \geq |G| (1- 1/\ell) = a(G).$$

\noindent
 where $\ell$ is as before the smallest prime divisor of $|G|$.

The first inequality $\deg(d_F) \leq \delta_P$ is standard.
Classically the polynomial $d_F$ is a generator of the ideal $N_{F/\Qq(T)}({\mathcal D}_{F/\Qq(T)})$ where ${\mathcal D}_{F/\Qq(T)}$ is the different and $N_{F/\Qq(T)}$ is the norm relative to the extension $F/\Qq(T)$ \cite[III, \S 3]{SeCorps_locaux}. From \cite[III, \S 6]{SeCorps_locaux}, in the prime ideal decomposition ${\mathcal D}_{F/\Qq(T)} =\prod_{\mathcal P} \mathcal P^{u_{\mathcal P}}$, we have $u_{\mathcal P}\geq e_{{\mathcal P}}-1$ for each prime ${\mathcal P}$, where $e_{\mathcal P} = e_{\frak p}$ is the corresponding ramification index, which only depends on the prime ${\frak p}$ below ${\mathcal P}$. The following inequalities, where $f_{{\mathcal P}}$ denotes the residue degree of ${\mathcal P}$, finish the proof:

\medskip
\centerline{$ \deg(d_F) \geq \sum_{{\frak p}} \sum_{{\mathcal P} | {\frak p}} f_{\mathcal P}(e_{\mathcal P} - 1) = \sum_{{\frak p}} |G| - |G|/e_{{\frak p}} \geq |G| (1-1/\ell)$}
%
% 
%
%If $G$ is cyclic of prime order $\ell$, the second inequality follows from the so-called ``branch cycle lemma'' showing that a regular realization $F/\Qq(T)$ of $\Zz/\ell\Zz$ has necessarily at least $\ell -1$ branch points distinct from $\infty$ ({\it e.g.} \cite[\S 3.1.4.4]{coursM2}); and so in this case we have indeed $\deg(d_F) \geq \ell-1 = |G| (1- 1/\ell)$. 
%
%Assume $G$ is not cyclic of prime order and proceed by induction. Being nilpotent, $G$ has a normal subgroup $G_0$ of order $\ell$ and $G/G_0$ is a non trivial nilpotent group. If $F_0$ is the fixed field of $G_0$ in $F$, then $F_0/\Qq(T)$ is a regular realization of $G/G_0$ over $\Qq$. It follows from the discriminant transitivity formula that $d_{F_0}^{[F:F_0]}$ divides $d_{F}$. As we may assume by induction that $\deg(d_{F_0}) \geq |G/G_0| \hskip 1pt (1-1/\ell_0)$ with $\ell_0$ the smallest prime divisor of $G/G_0$, we obtain $\deg(d_F) \geq [F:F_0]\hskip 1pt |G/G_0| \hskip 1pt(1-1/\ell_0)$. The result follows since $\ell_0\geq \ell$.
\end{proof}

\begin{remark} Our parameter $\delta (G)$ can also be compared to the minimum, say $\rho(G)$, of all branch point numbers $r$ of regular realizations $F/\Qq(T)$ of $G$: for such an extension $F/\Qq(T)$ we have $\deg(d_F)\geq r-1$ whence 
$\delta(G) \geq \rho(G) - 1$.
%We have $\deg(d_F)\geq r-1$ with $r$ the branch point number of $F/\Qq[T]$, 
But the inequality $a(G) \geq 1/(\rho(G)-1)$ does not hold in general. For example the symmetric group $S_n$ can be regularly realized over $\Qq$ with $3$ branch points so $\rho(S_n)= 3$ while $a(S_n) = 2/n!$ The analog of the\-orem \ref{thm:main} with $r-1$ replacing $\delta_{F/\Qq(T)}$ is false if the upper bound part of Malle's conjecture is true. 
%It is false already for $G=S_3$ as Malle's conjecture is known to be true, and $a(S_3) = 1/3$ while the exponent $\alpha(S_3,\varepsilon)$ computed with $r-1$ replacing $\delta_F$ is $5/(12+2\varepsilon)$.
\end{remark}

\subsection{The specialization result} \label{ssec:generalization} %\hskip 2mm

Theorem \ref{thm:main-plus} is a more precise version of our main result. It gives  
%theorem \ref{thm:main}. 
explicit estimates from which the asymptotic estimates of theorem \ref{thm:main} can easily be deduced (as explained in \S \ref{main-plus-implies-main}). Another difference is that it starts from a given regular Galois extension $F/\Qq(T)$ and the extensions $E/\Qq$ we count are specializations of it. Below are the necessary additional notation and data. 

\subsubsection{Notation} The following notation is used throughout the paper:
\vskip 1pt

\noindent
- for a Frobenius data ${{\mathcal F}}$ on a set of primes $S$, the product of all ratios $|{\mathcal F}_p|/{|G|}$ with $p\in S$, {\it i.e.} the {\it density} of ${{\mathcal F}}$, is denoted by $\chi({\mathcal F})$,
\vskip 1pt

\noindent
- for a finite set $S$ of primes, set $\Pi(S)=\prod_{p\in S}p$,  
\vskip 1pt

\noindent
- we also use the classical functions $\pi(x)$ and ${\Pi}(x)$ to denote respectively the number of primes $\leq x$ and the product of all primes $\leq x$. We have the classical asymptotics at $\infty$: $\pi(x) \sim x/\log(x)$ and $\log \Pi(x) \sim x$, 
\vskip 1pt

\noindent
- the height of a polynomial $F$ with coefficients in $\Qq$ is the maximum of the absolute values of its coefficients and is denoted by $H(F)$.

\subsubsection{Data} \label{ssec:data} Fix the following for the rest of the paper:
\vskip 1pt

\noindent
- $G$ is a non trivial finite group,
\vskip 1pt

%\noindent
%- $\hbox{{\rm cc}}(G)$ is the number of non-trivial conjugacy classes of $G$, 

\noindent
- $F/\Qq(T)$ is a regular Galois extension of group $G$,
\vskip 1pt
\noindent
- $f:X\rightarrow \Pp^1$ is the corresponding $\Qq$-cover,
\vskip 1pt

\noindent
- ${\mathbf t} = \{t_1,\ldots,t_r\} \subset \Pp^1(\overline \Qq)$ is the branch point set of $F/\Qq(T)$,
\vskip 1pt

%- the inertia canonical invariant ${\mathbf C} = \{C_1,\ldots,C_r\}$,

\noindent
- $g$ is the genus of the curve $X$, %the Riemann-Hurwitz formula ...
\vskip 1pt

\noindent
- $p_0(F/\Qq(T))$ is the prime defined as follows. Let $p_{-1}$ be the biggest prime $p$ such $p$ is bad for $F/\Qq(T)$ or  $p< r^2 |G|^2$.  Then $p_0(F/\Qq(T))$ is the smallest prime $p$ such that the interval $]p_{-1},p_0]$ contains as many primes as there are non-trivial conjugacy classes of $G$. For the rest of the paper we fix a prime $p_0 \geq p_0(F/\Qq(T))$.

\vskip 2pt

\noindent
- $\delta_{F/\Qq(T)}$ or $\delta_F$ for short is the minimal affine branching index of $F/\Qq(T)$, 
\vskip 3pt

\noindent
- $P(T,Y)$ is an  integral affine model of $F/\Qq(T)$ such that $\delta_P=\delta_F$ (with $\delta_P$ the degree of the discriminant $\Delta_P$) %(notation in \S \ref{ssec:exponent}) 
and which is primitive, {\it i.e.} has relatively prime coefficients (an assumption which one can always reduce to),

\vskip 2pt
\noindent
%$\delta, \delta^-\in \Rr$ such that $\delta_{F/\Qq(T)}<\delta^-<\delta$. 
- $S$ is a finite set of primes subject to these conditions:

\noindent
(a)  if no branch point of $f$ is in $\Zz$ then $S=\emptyset$.

\noindent
(b)  if at least one of the branch points of $f$, say $t_1$ is in $\Zz$, then $S$ is a finite set of good primes $p$, not dividing $t_1$ and not in $]p_{-1},p_0]$.%, and such that $S \cap S_0 =\emptyset$. 
\vskip 1mm

\noindent
(If at least one branch point is $\Qq$-rational, one can reduce to the assumption in (b) {\it via} a simple change of the variable $T$).
\vskip 1mm
\noindent
- for $x> p_0$, $S_x$  is the set of primes $p$ such that $p_0<p\leq x$ and $p\notin S$.
%(this notation is compatible with that of theorem \ref{thm:main} as $S=\emptyset$ there). 
\vskip 1mm

\noindent
- for technical reasons we change the condition ``$|d_E| \leq y$'' from theorem \ref{thm:main} to the more complicated one ``$|d_E| \leq \rho(x)$'' where 
$$\rho(x) = (1+\delta_P) H(\Delta_P) [\Pi(S) \hskip 1pt \Pi(x)]^{\delta_P}$$
\noindent
\vskip -1mm
\noindent (we have $\log \rho(x) \sim \delta_P x$ and so a simple change of variable leads back to the original condition).
\vskip 1mm

\subsubsection{Statement} \label{ssec:statement}
For $x> p_0$ let ${\mathcal F}_x$ be a Frobenius data on ${S}_x$. Theorem \ref{thm:main-plus} is about the number $N_F(x,S,{\mathcal F}_x)$ of distinct specializations $F_{t_0}/\Qq$ at points $t_0\in \Zz$ that satisfy
\vskip 1mm

\noindent
{\rm (i)} $\Gal(F_{t_0}/\Qq) = G$,

\noindent
{\rm (ii)} for each $p\in {S}_x$, $F_{t_0}/\Qq$ is unramified and ${\rm Frob}_p(F_{t_0}/\Qq) \in {\mathcal F}_p$,

\noindent
{\rm (iii)} for each $p\in S$, $F_{t_0}/\Qq$ is ramified at $p$,

\noindent
{\rm (iv)} $|d_{F_{t_0}}| \leq \rho(x)$.

%\noindent
%(d) $\displaystyle (1+\delta_P) \hskip 2pt H(\Delta_P) \prod_{p\in S_\ast} p^{2\delta_P} \prod_{p=2}^{\log (y)/\delta^-} p^{\delta_P} \leq y$

\begin{theorem} \label{thm:main-plus} 

{\rm (a)} There exist constants $C_1,C_2,C_3,C_4$ only depending on $P(T,Y)$ such that for every $x>p_0$, we have
$$N_F(x,S,{\mathcal F}_x) \geq  \hskip 2pt C_{1} \hskip 2pt \frac{\chi({\mathcal F}_x)}{\Pi(S)^2} \hskip 4pt \frac{{\Pi}(x)^{1-1/|G|}}{(\log {\Pi}(x))^{C_2} \hskip 2pt C_3^{\pi(x)}} - C_4
$$
\noindent
\vskip 2mm
\noindent
so \hskip 7mm $\log(N_F(x,S,{\mathcal F}_x))$ is bigger than a function $\lambda(x) \sim (1-1/|G|)\hskip 1pt x$.
\vskip 1mm

%he lower bound is of logarithm asymptotic to $(1-1/|G|)\hskip 1pt x$).

%{\rm (a)} There exist a real number $y_0>0$ (depending on $F$, $S_\ast$, $\delta$) such that for every  $y>y_0$ and every Frobenius data ${{\mathcal F}}$ on $S_y\setminus S_\ast$, the number $N(G,y,{\mathcal F},S_\ast)$ of specializations $F_{t_0}/\Qq$ at points $t_0\in \Zz$ that satisfy
%
%\noindent
%{\rm (i)} $\Gal(F_{t_0}/\Qq) = G$,
%
%\noindent
%{\rm (ii)} for each $p\in S_y\setminus S_\ast$, $F_{t_0}/\Qq$ is unramified and ${\rm Frob}_p(E/\Qq) \in {\mathcal F}_p$,
%
%\noindent
%{\rm (iii)} for each $p\in S_\ast$, $F_{t_0}/\Qq$ is ramified at $p$,
%
%\noindent
%{\rm (iv)} $|d_E| \leq y$
%
%\noindent
%is at least $\chi ({\mathcal F}) \hskip 2pt (y^{1/\delta})^{1-1/|G|}$.
%\smallskip
\vskip 1,5mm
\noindent
{\rm (b)} Furthermore the specializations $F_{t_0}/\Qq$ counted by the lower bound can be taken to be specializations at integers $t_0 \in [1, \Pi({S})\hskip 2pt \Pi(x)]$. 
%where $B_0(y)$ is the product of all primes $p\in ]p_0,\log(y)/\delta_{F/\Qq(T)}]$
%%$p\in ]p_0,\log(y)/\delta^{-}]$ 
%times a constant $\beta\in \Nn$ depending on $F/\Qq(T)$
%and $S_\ast$. 
\vskip 1,5mm

\noindent
{\rm (c)} Under Lang's conjecture on rational points on a variety of general ty\-pe and if $g\geq 2$, we have this better inequality
$$N_F(x,S,{\mathcal F}_x) \geq  \hskip 2pt C_{5} \hskip 2pt \frac{\chi({\mathcal F}_x)}{\Pi(S)^2} \hskip 4pt \frac{{\Pi}(x)}{C_6^{\pi(x)}} - C_7
$$
\noindent
for constants $C_5,C_6,C_7$ only depending on $P$, so then
\noindent
\vskip 1mm
\noindent
\hskip 12mm $\log(N_F(x,S,{\mathcal F}_x))$ is bigger than a function $\lambda(x) \sim x$.
\vskip 1,5mm 

\noindent
{\rm (d)} We have the following upper bound for the number ${\mathcal N}_F(x,{\mathcal F}_x)$ of integers $t_0 \in [1,\Pi({S_x)}]$ such that condition {\rm (ii)} above holds:
$${\mathcal N}_F(x,{\mathcal F}_x)  \leq \hskip 2pt \chi({\mathcal F}_x) \hskip 1mm \frac{\Pi({S_x)}}{\beta} \hskip 1mm  \left( 2 - \lambda\right)^{|S_x|}$$
\noindent
\hskip 0mm where $\lambda = (r|G|-1)/r^2 |G|^2\in ]0,1/4]$ and $\beta$ depends only on $F/\Qq(T)$.
\end{theorem}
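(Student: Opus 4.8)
The plan is to run the three–stage strategy sketched in \S\ref{ssec:role-self-twisted}, treating the four parts together and reducing everything to point counts on (twisted) covers.

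\emph{Stage 1: many integers $t_0$ with prescribed local behavior; this already yields (b).} Fix $x>p_0$. For each $p\in S_x$ I would combine the twisting lemma of \cite{DEGha} over $\Qq_p$ with the Lang--Weil/Weil estimate for the reduction mod $p$ of the cover $f$: it shows that the number of residues $a\bmod p$ for which the unramified specialization at $a$ has Frobenius in ${\mathcal F}_p$ is at least $\frac{|{\mathcal F}_p|}{|G|}\,p - c\,(r+g)\sqrt p$, which is $\gg p$ since $p>p_0>r^2|G|^2$. For each $p\in S$ (nonempty only when $t_1\in\Zz$, with $p\nmid t_1$) I would instead impose $t_0\equiv t_1\pmod p$, which for a good prime $p$ forces $F_{t_0}/\Qq$ to be ramified at $p$. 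Chinese remaindering over $S_x\cup S$ and counting representatives in $[1,\Pi(S)\Pi(x)]$ produces at least a number
$$ \gg \ \frac{\chi({\mathcal F}_x)}{\Pi(S)}\ \frac{\Pi(x)}{C_3^{\pi(x)}} $$
of integers $t_0\in[1,\Pi(S)\Pi(x)]$ satisfying (ii) and (iii), where $C_3^{\pi(x)}$ absorbs the product over $p\in S_x$ of the losses $1-O((r+g)|G|/\sqrt p)$ from the Weil error terms, each factor being bounded below by a fixed constant in $(0,1)$ by the choice of $p_0$. Condition (iv) is then free of charge, since $d_{F_{t_0}}$ divides $\Delta_P(t_0)$ and $|\Delta_P(t_0)|\leq(1+\delta_P)H(\Delta_P)\,t_0^{\delta_P}\leq\rho(x)$. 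A quantitative Hilbert irreducibility statement (itself a Walkowiak-type count on the finitely many intermediate covers of $f$) shows that only $O(\Pi(x)^{1/2}(\log\Pi(x))^{O(1)})$ of these $t_0$ fail (i); this is of lower order, so a positive proportion remains satisfying (i)--(iv), all in $[1,\Pi(S)\Pi(x)]$, which is part (b).

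\emph{Stages 2 and 3: bounding the multiplicity, uniformly, via the self-twisted cover; this yields (a).} Two integers in my list give isomorphic extensions only if they are both $T$-coordinates of $\Qq$-rational points of a single twisted cover. Using the twisting lemma of \cite{DEGha} a second time, now over $\Qq$: to a Galois extension $E/\Qq$ of group $G$ it attaches a twisted cover $\widetilde f^{\,E}\colon\widetilde X^{\,E}\to\Pp^1$ of degree $|G|$ and geometric genus $g$, such that $F_{t_0}\cong E$ forces $\widetilde X^{\,E}$ to have a $\Qq$-point above $t_0$. So the number of $t_0\in[1,B]$ (with $B=\Pi(S)\Pi(x)$) of a fixed isomorphism type $E$ is at most the number of $\Qq$-integral points of $\widetilde X^{\,E}$ with $T$-coordinate in $[1,B]$, which by the improved form of Walkowiak's theorem \cite{Wa} (Heath-Brown's determinant method, applied in the $Y$-direction, of degree $|G|$ — this is where the sharp exponent $1/|G|$, hence $\alpha(G,\delta)$, comes from; see \S\ref{ssec:walkowiak}) is $\ll B^{1/|G|}(\log B)^{C_2'}(\log H(\widetilde Q^{\,E}))^{C_2''}$ for an integral plane model $\widetilde Q^{\,E}\in\Zz[T,Y]$ of $\widetilde X^{\,E}$. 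The height $H(\widetilde Q^{\,E})$ a priori depends on $E$, hence on $t_0$, and this is precisely where the self-twisted cover of \S\ref{sec:self-twisted} enters: it is a single family, depending only on $F/\Qq(T)$, having all the twisted covers $\widetilde X^{\,E}$ among its fibers, and it produces a model $\widetilde Q^{\,E}$ with $H(\widetilde Q^{\,E})\leq c_1 t_0^{\,c_2}\leq c_1 B^{c_2}$ for $c_1,c_2$ depending only on $P$. Then $(\log H(\widetilde Q^{\,E}))^{C_2''}\ll(\log B)^{O(1)}$ is absorbed, each isomorphism type is realized at most $\ll B^{1/|G|}(\log B)^{C_2}$ times, and dividing the Stage 1 count by this multiplicity (absorbing $\Pi(S)=O(1)$ into the constants) gives the inequality of (a), the additive $-C_4$ accounting for the lower-order exceptional and boundary contributions; taking logarithms, $\log N_F(x,S,{\mathcal F}_x)\geq\lambda(x)$ with $\lambda(x)\sim(1-1/|G|)\,x$ since $\log\Pi(x)\sim x$ and $\pi(x)=o(x)$.

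\emph{Parts (c), (d), and the main obstacle.} For (c), when $g\geq2$ each $\widetilde X^{\,E}$ is a curve of general type; under Lang's conjecture the Caporaso--Harris--Mazur argument yields a bound $N_0=N_0(g)$ on $\#\widetilde X^{\,E}(\Qq)$ that is uniform in $E$, so the Stage 2 multiplicity drops to $O(1)$, no power of $\Pi(x)$ is lost, and the exponent becomes $1$ with $\lambda(x)\sim x$. Part (d) needs no twisting: it is the matching one-period upper bound ${\mathcal N}_F(x,{\mathcal F}_x)\leq\prod_{p\in S_x}\#\{a\bmod p:\mathrm{Frob}\in{\mathcal F}_p\}$, where the same Weil estimate gives each factor $\leq\frac{|{\mathcal F}_p|}{|G|}p+c(r+g)\sqrt p\leq\frac{|{\mathcal F}_p|}{|G|}p\,(2-\lambda)$ with $\lambda=(r|G|-1)/r^2|G|^2\in]0,1/4]$, the inequality holding because $p>p_{-1}\geq r^2|G|^2$; multiplying over $p\in S_x$ gives $\chi({\mathcal F}_x)\,\Pi(S_x)\,(2-\lambda)^{|S_x|}$ up to a constant $\beta$ coming from finitely many exceptional primes. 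The crux of the whole argument — and the reason for \S\ref{sec:self-twisted} — is Stage 3: making the per-class point count \emph{uniform} in the class. Treating each $\widetilde X^{\,E}$ in isolation gives a height, hence a point-count bound, that degrades as $t_0$ grows and destroys the lower bound; the self-twisted cover is exactly the device packaging all the $\widetilde X^{\,E}$ into one family and providing the polynomial-in-$t_0$ height bound. A secondary difficulty is that extracting the sharp $\alpha(G,\delta)=(1-1/|G|)/\delta$ forces one to sharpen Walkowiak's theorem in the degree-in-$Y$ parameter rather than use it as a black box.
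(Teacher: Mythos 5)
Your proof follows the paper's architecture in all four parts: the twisting lemma over $\Qq_p$ combined with Lang--Weil and the Chinese remainder theorem for the local stage, the twisting lemma over $\Qq$ together with the self-twisted cover and the sharpened Walkowiak bound (exponent $1/|G|$) for the multiplicity count, Caporaso--Harris--Mazur under Lang's conjecture for (c), and the upper Lang--Weil estimate for (d). You also correctly isolate the two crucial points: the uniform, polynomial-in-$t_0$ height bound supplied by the self-twisted cover, and the need to improve Walkowiak's exponent from $1/2$ to $1/\deg_Y$.

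Two steps need repair. (1) For condition (iii), imposing $t_0\equiv t_1\pmod p$ does not force ramification at $p$: by the Beckmann--Grothendieck description, the inertia at $p$ in $F_{t_0}/\Qq$ is generated by $g_1^{v_p(t_0-t_1)}$ with $g_1$ a generator of the inertia above the branch point $t_1$, so the prime is unramified whenever $v_p(t_0-t_1)$ is a multiple of the order of $g_1$. The paper imposes $t_0\equiv t_1+p\pmod{p^2}$, which pins $v_p(t_0-t_1)=1$; this is also why the modulus of the congruence construction involves $\Pi(S)^2$ and why $\Pi(S)^2$, not $\Pi(S)$, sits in the denominator of the stated inequality. (2) For condition (i), you subtract a quantitative-HIT exceptional set of size $O(B^{1/2}(\log B)^{O(1)})$. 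This does yield the asymptotic conclusion $\log N_F(x,S,{\mathcal F}_x)\geq\lambda(x)\sim(1-1/|G|)x$, since after division by the multiplicity $\asymp B^{1/|G|}(\log B)^{C_2}$ the subtracted term is $\asymp B^{1/2-1/|G|}(\log B)^{O(1)}$ and is dominated by the main term; but it is not a constant, so the explicit inequality of (a) with only a $-C_4$ error term is not recovered as stated. The paper avoids this entirely by fixing an auxiliary set $S_0$ of primes in $]p_{-1},p_0]$, one per nontrivial conjugacy class of $G$, prescribing a nontrivial Frobenius class at each, and invoking Jordan's lemma: every $t_0$ produced by the construction then automatically satisfies $\Gal(F_{t_0}/\Qq)=G$, and this auxiliary set is also the source of the constant $\beta=\Pi(S_0)$ appearing in part (d). A final bookkeeping point: isomorphism of $F_{t_0}/\Qq$ with a given $E$ only forces the specialization representation to agree with a $\Gabs_\Qq$-representation of $E$ up to $\Aut(G)$, not up to conjugacy, so it is one of finitely many twisted covers that acquires a point above $t_0$; the paper counts non-conjugate representations and divides by $|\Aut(G)|$ at the end, which only affects constants.
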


\subsubsection{Proof of theorem \ref{thm:main} assuming theorem \ref{thm:main-plus}}
\label{main-plus-implies-main}
%\begin{proof}[Proof of theorem \ref{thm:main} assuming theorem \ref{thm:main-plus}]
Pick a regular realization $F/\Qq(T)$ of $G$ and an integral affine model $P(T,Y)$ such that $\delta_P=\delta_F=\delta(G)$. Set $p_0(G) = p_0(F/\Qq(T))$. Fix $\delta > \delta(G)$ and set $\delta^- = (\delta+\delta(G))/2$. Let $y>0$ and $x= \log (y)/\delta^-$.

As $\delta^-<\delta$ we have ${\mathcal S}_y \subset {S}_x$. Complete the given Frobenius data ${\mathcal F}_y$ on ${\mathcal S}_y$ in an arbitrary way to make it a Frobenius data ${\mathcal F}_x$ on $S_x$.  Apply theorem \ref{thm:main-plus} to ${\mathcal F}_x$ and $S=\emptyset$. As $\log \rho(x)\sim (\delta_P/\delta^-) \log y$, if $y$ is suitably large, $\rho(x) \leq y$. It follows that $N(G,y,{\mathcal F}_y) \geq N_F(x,\emptyset ,{\mathcal F}_x)$
and so $N(G,y,{\mathcal F}_y)$ can be bounded from below by the right-hand side term of the inequality from theorem \ref{thm:main-plus} (a) with  $x= \log (y)/\delta^-$. The logarithm of this term is asymptotic to $(1-1/|G|) \log (y) /\delta^-$. Conclude that for suitably large $y$, this term is bigger than $y^{(1-1/|G|)/\delta}$. 
%\end{proof}

\subsubsection{Remarks}
%\begin{remark} 
(a) In the situation $S\not=\emptyset$, for which it is possible to prescribe ramification at some primes, the assumption that at least one branch point is $\Qq$-rational cannot be removed, as explained in Legrand's paper \cite{Legr1}, which is devoted to this situation. Many groups  have a regular realization $F/\Qq(T)$ satisfying this assumption, although being of even order is a necessary condition \cite[\S 3.2]{Legr1}: abelian groups of even order, symmetric groups $S_n$ ($n\geq 2$), alternating groups $A_n$ ($n\geq 4$), many simple groups (including the Monster), etc. 

For these groups, theorem \ref{thm:main-plus} leads to a generalized version of the inequality from theorem \ref{thm:main} where  the left-hand term $N(G,y,{\mathcal F}_y)$ is replaced by the (smaller) number, say $N(G,S,y,{\mathcal F}_y)$, of extensions $E/\Qq$ which, in addition to the conditions prescribed in theorem \ref{thm:main}, are required to ramify at every prime from a finite set $S$ of suitably big primes (and where the set ${\mathcal S}_y$ is of course replaced by ${\mathcal S}_y\setminus S$).
%we have this improved version of theorem \ref{thm:main}:
%\vskip 3mm
%
%\noindent
%{\it For every $\varepsilon >0$, for every finite set $S$ of suitably large primes (depending on $G$), for all suitably large $y$ (depending on $G$ and $\varepsilon$) and every Frobenius data ${{\mathcal F}_y}$ on 
%${\mathcal S}_y\setminus S$, there exist at least $y^{\alpha(G,\varepsilon)}$ Galois extensions $E/\Qq$ of group $G$, ramified at each $p\in S$, unramified with ${\rm Frob}_p(E/\Qq) \in {\mathcal F}_p$
%at each $p\in {\mathcal S}_y$ and of discriminant $|d_{E}| \leq y$.}
%
\vskip 1mm

%\noindent
(b) The upper bound in theorem \ref{thm:main-plus} (d) concerns extensions $E/\Qq$ that are specializations of a given regular extension $F/\Qq(T)$ (at integers $t_0$) and so does not directly lead to upper bounds for  $N(G,y,{\mathcal F}_y)$. A natural hypothesis to make in this context is that
%Of course, this is possible in a situation where all the extensions $E/\Qq$ are of this form, for example, if the group 
$G$ has a generic extension $F/\Qq(T)$ (or more generally a parametric extension, as defined in \cite{Legr1}): indeed then all Galois extensions $E/\Qq$ of group $G$ are specializations of $F/\Qq(T)$ (at points $t_0\in \Qq$). But only the four groups $\{1\}$, $\Zz/2\Zz$, $\Zz/3\Zz$, $S_3$ have a generic extension $F/\Qq(T)$ \cite[p.194]{JLY}.
%\cite[page 194]{JLY}. 
%\end{remark}

\subsubsection{An application of the upper bound part from theorem \ref{thm:main-plus}} For every $x > p_0$, let ${\mathcal N}_{\rm tot.split}(x)$ be the set of all integers $t_0\geq 1$ such that the specialization $F_{t_0}/\Qq$ is totally split at each prime $p_0 <p\leq x$,  {\it i.e.} which satisfy condition (ii) from theorem \ref{thm:main-plus} with ${\mathcal F}_p$ taken to be the trivial conjugacy class for each $p\in S_x$.  

%Theorem \ref{thm:main-plus} produces many integers $t_0$ which satisfy this condition and the other conditions from theorem \ref{thm:main-plus} and \S \ref{sec:proof} will show that ${\mathcal N}_{\rm tot.split}(x)$ itself ({\it i.e.} restricting to condition (ii) alone) contains in fact as many cosets modulo $\Pi({S_x})$.
%
\begin{corollary}
For every $x > p_0$, ${\mathcal N}_{\rm tot.split}(x)$ is a union of (many) cosets modulo $\Pi({S_x})$ 
but its density decreases to $0$ as $x\rightarrow +\infty$.
\end{corollary}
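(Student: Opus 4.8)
The plan is to derive the corollary directly from the upper bound in Theorem \ref{thm:main-plus}(d) --- the only non-formal ingredient --- together with the specialization-inertia principle already used throughout the paper.

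First I would record the coset structure. By construction every prime in $S_x$ is good for $F/\Qq(T)$, so for such a prime $p$ the behavior of the specialization $F_{t_0}/\Qq$ at $p$ --- whether it is unramified there and, if so, the conjugacy class of $\mathrm{Frob}_p(F_{t_0}/\Qq)$ --- depends only on the residue of $t_0$ modulo $p$; this is the basic fact recalled from \cite{DEGha}. Hence, for each $p\in S_x$, the set of $t_0\in\Zz$ for which $F_{t_0}/\Qq$ is unramified and totally split at $p$ is a union of residue classes modulo $p$, and by the Chinese Remainder Theorem ${\mathcal N}_{\rm tot.split}(x)$ is a union of residue classes modulo $\Pi(S_x)$, the number of classes being exactly ${\mathcal N}_F(x,{\mathcal F}_x^{\rm triv})$ for the trivial Frobenius data ${\mathcal F}_x^{\rm triv}$ (i.e. ${\mathcal F}_p=\{1\}$ for all $p\in S_x$). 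For the parenthetical ``(many)'' I would invoke the Lang--Weil count underlying the lower bound: at each $p\in S_x$ the number of admissible residues is $p/|G|+O(\sqrt p)$, positive for these (large) primes, so the number of cosets is at least $\prod_{p\in S_x}\bigl(p/|G|+O(\sqrt p)\bigr)$, which tends to $+\infty$ with $x$.

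Next I would compute the density. The natural density of a union of $k$ residue classes modulo $n$ equals $k/n$; since ${\mathcal N}_{\rm tot.split}(x)$ is $\Pi(S_x)$-periodic it therefore has density ${\mathcal N}_F(x,{\mathcal F}_x^{\rm triv})/\Pi(S_x)$. Writing this via the Chinese Remainder Theorem as $\prod_{p\in S_x}\bigl(n_p/p\bigr)$, where $n_p$ is the number of admissible residues modulo $p$, shows at once that the density is non-increasing in $x$ and strictly decreases as each new prime enters $S_x$, every factor being $<1$. To see it goes to $0$ I apply Theorem \ref{thm:main-plus}(d) to ${\mathcal F}_x^{\rm triv}$, for which $\chi({\mathcal F}_x^{\rm triv})=\prod_{p\in S_x}1/|G|=|G|^{-|S_x|}$; this yields
$$\frac{{\mathcal N}_F(x,{\mathcal F}_x^{\rm triv})}{\Pi(S_x)}\;\leq\;\frac{1}{\beta}\Bigl(\frac{2-\lambda}{|G|}\Bigr)^{|S_x|},$$
with $\lambda\in\,]0,1/4]$ and $\beta$ depending only on $F/\Qq(T)$. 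As $G$ is non trivial, $|G|\geq 2$, so $(2-\lambda)/|G|<1$ (when $|G|\geq 3$ because $2-\lambda<2$, and when $|G|=2$ because $\lambda>0$), and since $|S_x|=\pi(x)-\pi(p_0)\to+\infty$ the right-hand side, hence the density, tends to $0$.

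There is no serious obstacle here: granted Theorem \ref{thm:main-plus}(d), the argument is formal. The two points that need a little care are the assertion that there are ``many'' cosets --- best handled by quoting the Lang--Weil lower bound already established rather than reproving it --- and the elementary inequality $(2-\lambda)/|G|<1$, whose borderline case $|G|=2$ genuinely uses that $\lambda$ is bounded away from $0$ (so that it matters that $\lambda\in\,]0,1/4]$ and not merely $[0,1/4]$).
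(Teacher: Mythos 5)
Your proof is correct and follows essentially the same route as the paper: the coset structure comes from the good-prime/Chinese-remainder argument (proposition \ref{prop:first-part}~(c)), and the density is bounded by applying theorem \ref{thm:main-plus}~(d) to the trivial Frobenius data, for which $\chi({\mathcal F}_x)=|G|^{-|S_x|}$, yielding the factor $\bigl((2-\lambda)/|G|\bigr)^{|S_x|}\to 0$. Your added checks --- the Lang--Weil count justifying ``many'' cosets and the verification that $(2-\lambda)/|G|<1$ even when $|G|=2$ --- are sound refinements of details the paper leaves implicit.
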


\begin{proof}
We anticipate on \S \ref{sec:proof} to say that ${\mathcal N}_{\rm tot.split}(x)$ is a union of cosets modulo $\Pi({S_x})$ 
(see proposition \ref{prop:first-part} (c))
%(whose number can be bounded from below similarly as in theorem \ref{thm:main-plus})
and focus on the density part of the statement. Every integer $t_0\in {\mathcal N}_{\rm tot.split}(x)$  writes
$t_0 = u + k \hskip 2pt \Pi({S_x})$ with $u$ one of the elements in $[1,\Pi({S_x})]$ counted by  ${\mathcal N}_F(x,{\mathcal F}_x)$ and $k\in \Zz$. Let $N\geq 1$ be any integer. If $1\leq t_0 \leq N$, then $k \leq N/\Pi({S_x})$. It follows then from theorem \ref{thm:main-plus} (d) that 
$$ \left|{\mathcal N}_{\rm tot.split}(x) \cap [1,N]\right| \leq \frac{N}{\Pi({S_x})} \times {\mathcal N}_F(x,{\mathcal F}_x) \leq  \hskip 2pt \frac{N}{\beta} \times  \left( \frac{2-\lambda}{|G|}\right)^{|S_x|}$$

\noindent
which divided by $N$ tends to $0$ as $x\rightarrow +\infty$. \end{proof}

%\begin{proof}
%Every integer $t_0\in {\mathcal N}_{\rm tot.split}(x)$ satisfies condition (ii) from theorem \ref{thm:main-plus} with ${\mathcal F}_p$ taken to be the trivial conjugacy class for each $p\in S_x$. Here we anticipate on \S \ref{sec:proof}  to conclude that
%%s in the intersection of all subsets ${\mathcal T}({\mathcal F}_p)\subset \Zz$ [??] with $p\in S_x$ 
%$t_0$ writes
%$t_0 = u + k \hskip 2pt \Pi({S_x})$ with $u$ one of the elements in $[1,\Pi({S_x})]$ counted by  ${\mathcal N}_F(x,{\mathcal F}_x)$ and $k\in \Zz$ (see proposition \ref{prop:first-part} (c)). Let $N\geq 1$ be any integer. If $1\leq t_0 \leq N$, then $k \leq N/\Pi({S_x})$. It follows then from theorem \ref{thm:main-plus} that 
%$$ \left|{\mathcal N}_{\rm tot.split}(x) \cap [1,N]\right| \leq \frac{N}{\Pi({S_x})} \times {\mathcal N}_F(x,{\mathcal F}_x) \leq  \hskip 2pt \frac{N}{\beta} \times  \left( \frac{2-\lambda}{|G|}\right)^{|S_x|}$$
%
%\noindent
%which divided by $N$ tends to $0$ as $x\rightarrow +\infty$. \end{proof}

Similar density conclusions can be obtained for other local behaviors for which the sets ${\mathcal F}_p$ are not too big compared to $G$.

\section{The self-twisted cover} \label{sec:self-twisted}

In \S \ref{sec:twisting}, we recall the twisting operation on covers and the twisting lemma (\S \ref{ssec:twisting_lemma}) and explain the motivation for introducing the self-twisted cover (\S \ref{ssec:motivation}). \S \ref{ssec:self-twisted} is devoted to its construction. Covers are viewed here as fundamental group representations. The correspondence is briefly recalled in \S \ref{ssec:fund-group-basics}.

\subsection{Twisting G-Galois covers} \label{sec:twisting}

For the material of this subsection, we refer to \cite{DEGha}.

\subsubsection{Fundamental groups representations of covers} \label{ssec:fund-group-basics}
Given a field $k$, denote its absolute Galois group by $\Gabs_k$.

If $E/k$ is a Galois extension of group $G$, an epimorphism $\varphi: \Gabs_k\rightarrow G$ such that $E$ is the fixed field of ${\rm ker}(\varphi)$ in $\overline k$ 
is a called a {\it $\Gabs_k$-representation} of $E/k$.

Given a finite subset ${\mathbf t}\subset \Pp^1(\overline k)$ invariant under $\Gabs_k$, the $k$-fundamental group of $\Pp^1\setminus {\mathbf t}$ is denoted by $\pi_1(\Pp^1\setminus {\mathbf t}, t)_k$; here $t$ denotes the fixed {\it base point}, which corresponds to choosing an embedding of $k(T)$ in an algebraically closed field $\Omega$. The $\overline k$-fundamental group $\pi_1(\Pp^1\setminus {\mathbf t}, t)_{\overline k}$ is defined as the Galois group of the maximal algebraic extension $\Omega_{{\mathbf t},k}/\overline k(T)$ (inside $\Omega$) unramified above $\Pp^1\setminus {\mathbf t}$ and the $k$-fundamental group $\pi_1(\Pp^1\setminus {\mathbf t}, t)_{k}$ as the group of the Galois extension $\Omega_{{\mathbf t},k}/k(T)$. 

Degree $d$ $k$-covers of $\Pp^1$  (resp. $k$-G-Galois covers of $\Pp^1$ of group $G$) with branch points in $\mathbf t$ correspond to transitive homomorphisms $\pi_1(\Pp^1\setminus {\mathbf t}, t)_{k} \rightarrow S_d$ (resp. to epimorphisms $\pi_1(\Pp^1\setminus {\mathbf t})_{k},t) \rightarrow G$), with the extra regularity condition that the restriction of $\phi$ to $\pi_1(B\setminus D, t)_{\overline k}$ remains transitive (resp. remains onto). These corresponding homomorphisms are called the {\it fundamental group representations} (or {\it $\pi_1$-representations} for short) of the cover $f$ (resp the G-cover $f$).

Each $k$-rational point $t_0\in \Pp^1(k)\setminus {\mathbf t}$ provides a section
${\sf s}_{t_0}: \Gabs_k\rightarrow \pi_1(\Pp^1\setminus {\mathbf t},t)_{k}$ to the exact sequence
$$ 1\rightarrow \pi_1(\Pp^1\setminus {\mathbf t}, t)_{\overline k} \rightarrow \pi_1(\Pp^1\setminus {\mathbf t},t)_k \rightarrow \Gabs_k \rightarrow 1$$ 
\noindent
which is uniquely defined up to conjugation by an element in the fundamental group $\pi_1(\Pp^1\setminus {\mathbf t}, t)_{\overline k}$.

 If $\phi: \pi_1(\Pp^1\setminus {\mathbf t}, t)_k \rightarrow G$ represents a $k$-G-Galois cover $f:X\rightarrow \Pp^1$, the morphism $\phi \circ {\sf s}_{t_0}:\Gabs_k \rightarrow G$ is the {\it specialization representation} of $\phi$ at $t_0$. The fixed field in $\overline k$ of ${\rm ker}(\phi \circ {\sf s}_{t_0})$ is the specialization $k(X)_{t_0}/k(T)$ of $k(X)/k(T)$
 at $t_0$ (defined in \S \ref{ssec:basics}). 

\subsubsection{The twisting lemma} \label{ssec:twisting_lemma}
We recall how a regular $k$-G-Galois cover $f:X\rightarrow \Pp^1$ of group $G$ can be twisted by some 
Galois extension $E/k$ of group $H\subset G$. Formally this is done in terms of the associated $\pi_1$- 
and $\Gabs_k$- representations. 
%For the details, we refer to \cite{DEGha} (or \cite{DeBB}).

%Fix a field $k$ of characteristic $0$. 
Let $\phi: \pi_1(\Pp^1\setminus {\mathbf t}, t)_k \rightarrow G$ be a $\pi_1$-representation of the regular $k$-G-cover $f:X\rightarrow \Pp^1$ and $\varphi:\Gabs_k \rightarrow G$ be a $\Gabs_k$-representation of  the Galois extension $E/k$.

Denote the right-regular (resp. left-regular) representation of $G$ by $\delta: G\rightarrow S_d$ (resp. by $\gamma: G\rightarrow S_d$) where $d=|G|$. Define $\varphi^\ast:\Gabs_k \rightarrow G$ by
$\varphi^\ast (g) = \varphi (g) ^{-1}$. Consider the map $\widetilde \phi^\varphi:
\pi_1(\Pp^1\setminus {\mathbf t}, t)_k \rightarrow S_d$ defined by the following formula, where $r$ is the restriction map $\pi_1(\Pp^1\setminus {\mathbf t}, t)_k \rightarrow \Gabs_k$ and $\times$ is the multiplication in the symmetric group $S_d$:
$$ \widetilde \phi^\varphi(\theta)  =
\gamma \phi(\theta) \times \delta \varphi^\ast r (\theta) \hskip 6mm (\theta\in \pi_1(\Pp^1\setminus {\mathbf t}, t)_k). $$
\noindent
The map $\widetilde \phi^\varphi$ is a group homomorphism
with the same restriction on $\pi_1(\Pp^1\setminus {\mathbf t}, t)_{\overline k}$
as $\phi$. It is called  the {\it twisted representation} of 
$\phi$ by $\varphi$. The associated regular $k$-cover is a $k$-model 
of the cover $f\otimes_k{\overline k}$. It is denoted by $\widetilde f^\varphi: 
\widetilde X^\varphi \rightarrow \Pp^1$ and called  the {\it twisted cover} of 
$f$ by $\varphi$. The following statement is the main property 
of the twisted cover.

\begin{twisting lemma} \label{prop:twisted cover} 
Let $t_0\in \Pp^1(k)\setminus {\mathbf t}$. Then the specialization representation $\phi \circ {\sf s}_{t_0}:\Gabs_k \rightarrow G$ %of the G-cover $f$ at $t_0$ 
is conjugate in $G$ to $\varphi:\Gabs_k \rightarrow G$ if and only if there exists $x_0\in \widetilde X^\varphi(k)$ such that $\widetilde f^\varphi(x_0)=t_0$.
\end{twisting lemma}

\subsubsection{The motivation for the self-twisted cover} \label{ssec:motivation}
As explained in \S \ref{ssec:role-self-twisted}, we will have to control the height of some polynomials defining some twisted covers. These twisted covers are obtained by twisting the given G-Galois cover $f:X\rightarrow \Pp^1$ by its own specializations $\Qq(X)_{u_0}/\Qq$ ($u_0\in \Qq$); we call them the {\it fiber-twisted covers}. \S \ref{ssec:self-twisted} shows that the fiber-twisted covers are all members of an algebraic family of covers: the {\it self-twisted cover}. The practical use for the end of the paper is the following result. It is a consequence of lemma \ref{lemma:spec-self-twist}.

%From now on we assume that $k$ is of characteristic $0$.

\begin{theorem} \label{prop:self-twist}
Given a regular $k$-G-cover $f:X\rightarrow \Pp^1$, there exists a polynomial $\widetilde P(U,T,Y) \in k[U,T,Y]$ irreducible in $\overline{k(U)}(T)[Y]$, monic in $Y$, and a finite set ${\mathcal E}\subset k$ such that for every $u_0\in k\setminus {\mathcal E}$, 

\noindent
{\rm (a)} $\widetilde P(u_0,T,Y)$ is irreducible in $\overline{k}(T)[Y]$,

\noindent
{\rm (b)} $\widetilde P(u_0,T,Y)$ is an integral affine model of the fiber-twisted cover of $f$ at $u_0$,
%the irreducible polynomial of the function field extension of the fiber-twisted cover of $f$ at $u_0$,  
%- ? for $u_0\in k\setminus E$, we have $\widetilde \Delta_N(u_0) \not=0$ ?
%$\widetilde P(u_0,T,Y)$

\noindent
{\rm (c)} the genus of the curve $\widetilde P(u_0,t,y)=0$ is $g$.
\end{theorem}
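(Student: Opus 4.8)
The plan is to make the family of fiber-twisted covers algebraic by carrying out the twisting construction once, universally, over the generic point of a parameter line $\Spec k[U]$ rather than separately at each $u_0\in k$. Concretely, I would first recall from \S\ref{ssec:fund-group-basics} that the fiber-twisted cover of $f$ at $u_0$ is the twisted cover $\widetilde f^{\varphi_{u_0}}$, where $\varphi_{u_0}=\phi\circ {\sf s}_{u_0}$ is the specialization representation of a fixed $\pi_1$-representation $\phi$ at $u_0$. The key idea is that the specialization sections ${\sf s}_{u_0}$, as $u_0$ varies, are all instances of one canonical object: introduce a second copy of the line with coordinate $U$, work over the function field $k(U)$, and consider the "diagonal-like" point $T=U$, i.e. the $k(U)$-rational point of $\Pp^1_{k(U)}\setminus\mathbf t$ corresponding to $t_0=U$. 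This gives a single section ${\sf s}_U:\Gabs_{k(U)}\to \pi_1(\Pp^1\setminus\mathbf t, t)_{k(U)}$, and composing with (the base change to $k(U)$ of) $\phi$ produces one representation $\varphi^{\rm univ}=\phi\circ{\sf s}_U:\Gabs_{k(U)}\to G$ that specializes, under $U\mapsto u_0$, to $\varphi_{u_0}$ for all but finitely many $u_0$.

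Next I would apply the twisting construction of \S\ref{ssec:twisting_lemma} verbatim, but with $k$ replaced by the field $K=k(U)$: twist the base-changed $K$-G-cover $f\otimes_k K$ by $\varphi^{\rm univ}$. This yields a regular $K$-cover $\widetilde f^{\varphi^{\rm univ}}:\widetilde X^{\varphi^{\rm univ}}\to\Pp^1_K$, a $K$-model of $f\otimes_k\overline K$, hence of degree $|G|$ and (since twisting does not change the cover over $\overline K$, having "the same restriction on $\pi_1(\Pp^1\setminus\mathbf t,t)_{\overline k}$") of the same geometric monodromy; in particular its geometric genus is $g$. Choosing a primitive element of $K(\widetilde X^{\varphi^{\rm univ}})/K(T)$ integral over $k[U,T]$ and clearing denominators gives a polynomial $\widetilde P(U,T,Y)\in k[U,T,Y]$, monic in $Y$ and irreducible in $\overline{k(U)}(T)[Y]$ (the latter because $\widetilde f^{\varphi^{\rm univ}}\otimes\overline{k(U)}$ is connected, being a model of the geometrically connected cover $f\otimes\overline k$). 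This is the candidate $\widetilde P$.

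Finally I would establish the specialization statement, which is exactly the content flagged as "a consequence of lemma \ref{lemma:spec-self-twist}". Parts (a), (b), (c) all follow from a good-reduction / specialization principle for this one family: there is a finite set ${\mathcal E}\subset k$ — containing the branch points in $k$, the bad primes of the construction, the locus where the leading coefficients of $\widetilde P$ in $Y$ or the $Y$-discriminant of $\widetilde P$ vanish or drop degree, and any value of $U$ at which irreducibility or the genus could degenerate — outside of which $\widetilde P(u_0,T,Y)$ stays irreducible over $\overline k(T)$ (giving (a)), the curve it defines has the same geometric genus $g$ as the generic member (giving (c), by constancy of genus in a family with good reduction, e.g.\ via the semicontinuity/invariance of the Euler characteristic), and the resulting $\overline k$-cover coincides with the specialization of the universal twist at $u_0$, which by functoriality of the twisting construction under the base change $K\to k$, $U\mapsto u_0$, and by comparing the sections ${\sf s}_U|_{U=u_0}$ with ${\sf s}_{u_0}$, is precisely the fiber-twisted cover $\widetilde f^{\varphi_{u_0}}$ (giving (b), together with integrality of the model since $\widetilde P$ was chosen integral over $k[U,T]$). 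The main obstacle is this last step: one must check carefully that specializing the section ${\sf s}_U$ at $U=u_0$ recovers ${\sf s}_{u_0}$ (up to the harmless $\pi_1(\Pp^1\setminus\mathbf t,t)_{\overline k}$-conjugacy in which these sections are defined), i.e.\ that "$t_0=U$, then $U\mapsto u_0$" is the same as "$t_0=u_0$" at the level of $\pi_1$-sections; this is a compatibility of specialization of fundamental groups that is intuitively clear but requires the base-point and branch-locus bookkeeping of \S\ref{ssec:fund-group-basics}, and it is what pins down the exceptional set ${\mathcal E}$.
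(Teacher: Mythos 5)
Your construction is exactly the paper's: the ``universal'' twist you describe \emph{is} the self-twisted cover of \S\ref{ssec:self-twisted} (base-change $\phi$ to $k(U)$ via ${\rm res}_{k(U)/k}$, compose with the section ${\sf s}_U$ at the point $T=U$ to get $\phi_U$, twist $\phi\otimes_k k(U)$ by $\phi_U$, and take the minimal polynomial of a primitive element integral over $k[U,T]$). Parts (a) and (c) are handled the same way in the paper (Bertini--Noether plus preservation of degree and genus outside a finite set). The problem is part (b): you correctly isolate the crucial step -- that specializing the universal twist at $U=u_0$ yields the fiber-twisted cover at $u_0$ -- but you then declare it ``intuitively clear but requires bookkeeping'' and stop. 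That step is precisely Lemma \ref{lemma:spec-self-twist}, and it is the mathematical content of the theorem; as stated, your proposal has a genuine gap there.

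The difficulty is not mere bookkeeping, for two reasons. First, there is no map under which ``${\sf s}_U$ specializes to ${\sf s}_{u_0}$'': the two sections land in different fundamental groups ($\pi_1(\Pp^1\setminus{\mathbf t},t)_{k(U)}$ versus $\pi_1(\Pp^1\setminus{\mathbf t},t)_{k}$), and the restriction morphism ${\rm res}_{k(U)/k}$ goes the wrong way for a naive functoriality argument. The paper instead compares the two twisted covers through their function fields: it computes that $\widetilde{Fk(U)}^{\phi_U}$ is the fixed field in $Fk(U)$ of ${\sf s}_U(\Gabs_{k(U)})$ and that $\widetilde F^{\phi_{t_0}}$ is the fixed field in $F$ of ${\sf s}_{t_0}(\Gabs_k)$, writes the primitive element $\widetilde{\mathcal Y}_U=\sum_i a_i(U){\mathcal Y}^i$, and must show the specialized element $\widetilde{\mathcal Y}_{t_0}$ is fixed by ${\sf s}_{t_0}(\Gabs_k)$. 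Second, doing so requires an actual analytic input: one embeds $\widetilde{Fk(U)}^{\phi_U}$ into $\overline{k(U)}((T-U))$, checks that the coefficients $b_n(U)$ of the expansion lie in $k(U)$ (this is where the Galois-invariance under ${\sf s}_U(\Gabs_{k(U)})$ is used), and then invokes the Eisenstein theorem to produce $E(U)\in k[U]$ with $E(U)^{n+1}b_n(U)\in k[U]$, so that the substitution $U\mapsto t_0$ makes sense termwise and lands in $k[[T-t_0]]$ for $t_0$ outside the zero set of $E$. That denominator control is where a nontrivial part of the exceptional set ${\mathcal E}$ comes from, and it is invisible in your sketch. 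To complete your proof you would need to supply this comparison argument (or an equivalent one).
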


\subsection{The self-twisted cover} \label{ssec:self-twisted}
Let $U$ be a new indeterminate (algebraically independent from $T$ and $Y$).  
%Consider the extension deduced from $F/k(T)$ by changing $T$ to $U$; denote it by $F_U/k(U)$. The idea is to 
%twist the $k(U)$-cover $f\otimes_kk(U)$ by the extension $F_U/k(U)$. This is what we do below in terms of 
%group representations.
%
Fix an algebraically closed field $\Omega$ containing $k(T,U)$, which we will use as a common base point $t$ for all fundamental groups involved. The algebraic closures of $k(T,U)$, $k(T)$, $k(U)$ and $k$ should be understood as the ones inside $\Omega$.

\subsubsection{A $\pi_1$-representation of $f\otimes_kk(U)$} As the compositum $\Omega_{{\mathbf t},k} \cdot\overline{k(U)}$ is contained in $\Omega_{{\mathbf t},k(U)}$, there is a restriction morphism
$${\rm res}_{k(U)/k}: \pi_1(\Pp^1\setminus {\mathbf t}, t)_{{k(U)}} \rightarrow \pi_1(\Pp^1\setminus {\mathbf t}, t)_{k}, $$
%$${\rm res}_{\overline{k(U)}/\overline k}: \pi_1(\Pp^1\setminus {\mathbf t}, t)_{\overline{k(U)}} \rightarrow \pi_1(\Pp^1\setminus {\mathbf t}, t)_{\overline k}$$
\noindent
which induces a map between the geometric parts of the fundamental groups:
$${\rm res}_{\overline{k(U)}/\overline k}: \pi_1(\Pp^1\setminus {\mathbf t}, t)_{\overline{k(U)}}\rightarrow \pi_1(\Pp^1\setminus {\mathbf t}, t)_{\overline k}$$
\noindent 
We also use the notation ${\rm res}_{\overline{k(U)}/\overline k}$  for the map $\Gabs_{k(U)} \rightarrow \Gabs_k$ induced on the absolute Galois groups.

\begin{lemma}
${\rm res}_{k(U)/k}: \pi_1(\Pp^1\setminus {\mathbf t}, t)_{{k(U)}} \rightarrow \pi_1(\Pp^1\setminus {\mathbf t}, t)_{k}$ is surjective and 
${\rm res}_{\overline{k(U)}/\overline k}: \pi_1(\Pp^1\setminus {\mathbf t}, t)_{\overline{k(U)}}\rightarrow \pi_1(\Pp^1\setminus {\mathbf t}, t)_{\overline k}$ is an isomorphism.
\end{lemma}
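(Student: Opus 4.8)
The plan is to compare, through the restriction maps, the two fundamental exact sequences
$$1\to \pi_1(\Pp^1\setminus{\mathbf t},t)_{\overline k}\to \pi_1(\Pp^1\setminus{\mathbf t},t)_{k}\to \Gabs_{k}\to 1$$
and the analogous one with $k$ replaced by $k(U)$ (and $\overline k$ by $\overline{k(U)}$). Both are exact on the right: $k$ and $k(U)$ are infinite, so $\Pp^1\setminus{\mathbf t}$ has a $k$-rational (resp.\ $k(U)$-rational) point off ${\mathbf t}$, and the corresponding sections ${\sf s}_{t_0}$ split the projection. Together with ${\rm res}_{k(U)/k}$, with ${\rm res}_{\overline{k(U)}/\overline k}$ on geometric groups, and with ${\rm res}_{\overline{k(U)}/\overline k}\colon\Gabs_{k(U)}\to\Gabs_k$, the two sequences form a commutative ladder. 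The statement then reduces to three points: (i) $\Gabs_{k(U)}\to\Gabs_k$ is onto; (ii) ${\rm res}_{\overline{k(U)}/\overline k}\colon\pi_1(\Pp^1\setminus{\mathbf t},t)_{\overline{k(U)}}\to \pi_1(\Pp^1\setminus{\mathbf t},t)_{\overline k}$ is an isomorphism; (iii) surjectivity of ${\rm res}_{k(U)/k}$, which will follow formally from (i) and (ii) by a diagram chase in the ladder.

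For (i) and the surjectivity half of (ii), the single input is that $\overline{k(U)}/\overline k$ is a regular extension: we are in characteristic $0$ and $\overline k$, being algebraically closed, is algebraically closed in $\overline{k(U)}$; hence $\overline{k(U)}$ is linearly disjoint from every extension of $\overline k$. Applied over $\overline k$ itself, $\overline k\otimes_k k(U)=\overline k(U)$ is a field, so $\Gabs_{k(U)}=\Gal(\overline{k(U)}/k(U))\twoheadrightarrow\Gal(\overline k(U)/k(U))=\Gabs_k$, which is (i). Applied to $\overline k(T)/\overline k$, the compositum $\overline{k(U)}(T)={\rm Frac}(\overline{k(U)}\otimes_{\overline k}\overline k(T))$ is again regular over $\overline k(T)$, so $\overline k(T)$ is algebraically closed in it; since $\Omega_{{\mathbf t},k}$ is algebraic over $\overline k(T)$ this gives $\Omega_{{\mathbf t},k}\cap\overline{k(U)}(T)=\overline k(T)$, which by the Galois theory of composita is exactly the surjectivity of ${\rm res}_{\overline{k(U)}/\overline k}$ on geometric groups.

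The injectivity half of (ii) is where the real content lies, and I expect it to be the main obstacle: it asserts that every finite connected cover of $\Pp^1_{\overline{k(U)}}\setminus{\mathbf t}$ is already defined over $\overline k$, i.e.\ $\Omega_{{\mathbf t},k(U)}=\Omega_{{\mathbf t},k}\cdot\overline{k(U)}(T)$. This is the invariance of the geometric fundamental group of a curve under an extension of the (characteristic $0$) algebraically closed base field — both groups are the profinite completion of the free group on $|{\mathbf t}|-1$ generators. I would obtain it either by descending the cover and ${\mathbf t}$ to a finitely generated, hence complex-embeddable, subfield of $\overline{k(U)}$ and invoking the topological description of $\pi_1$, or by citing Grothendieck's base-change theorem (SGA~1, Exp.~XIII); alternatively, once surjectivity is known, injectivity is automatic because the common group is topologically finitely generated, so that a surjective endomorphism is an isomorphism. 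Finally, for (iii): given $\theta\in\pi_1(\Pp^1\setminus{\mathbf t},t)_{k}$, lift its image in $\Gabs_k$ first to $\Gabs_{k(U)}$ using (i), then to some $\theta'\in\pi_1(\Pp^1\setminus{\mathbf t},t)_{k(U)}$ using right-exactness; the element $\theta\cdot{\rm res}_{k(U)/k}(\theta')^{-1}$ then lies in the geometric subgroup $\pi_1(\Pp^1\setminus{\mathbf t},t)_{\overline k}$, hence is the image under ${\rm res}_{\overline{k(U)}/\overline k}$ of some geometric element $\tau$ (surjectivity half of (ii)), and $\tau\hskip 1pt\theta'$ is a preimage of $\theta$. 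This gives surjectivity of ${\rm res}_{k(U)/k}$ and, with (ii), the lemma.
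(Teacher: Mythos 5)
Your proof is correct, and for the heart of the lemma --- the fact that $\pi_1(\Pp^1\setminus {\mathbf t}, t)_{\overline{k(U)}}\rightarrow \pi_1(\Pp^1\setminus {\mathbf t}, t)_{\overline k}$ is an isomorphism --- you and the paper invoke the same input: surjectivity comes from $\overline k(T)$ being algebraically closed in $\overline{k(U)}(T)$ (the paper phrases this as ``extending the base field does not change the group of regular Galois extensions''), and injectivity is the invariance of the geometric fundamental group under extension of algebraically closed base fields in characteristic $0$, which the paper simply cites from Serre's \emph{Topics in Galois theory} (theorem 6.3.3, equivalent to your SGA~1, Exp.~XIII reference). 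Where you genuinely diverge is the surjectivity of ${\rm res}_{k(U)/k}$ on the full arithmetic fundamental groups: the paper constructs a preimage of $\sigma$ directly, by extending it to an automorphism of $\overline{k(T)}(U)$ fixing $U$, then to an element of $\Gabs_{k(T,U)}$, and observing that this element stabilizes $\Omega_{{\mathbf t},k(U)}$; you instead run a five-lemma-style chase through the ladder of fundamental exact sequences, reducing everything to surjectivity of $\Gabs_{k(U)}\to\Gabs_k$ and of the geometric restriction. Your route is more formal and arguably cleaner (it isolates exactly which outer maps need to be onto), at the cost of needing the right-exactness of both sequences, which you correctly supply via rational base points; the paper's route is more hands-on and does not pass through the exact sequences at all. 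One small caveat: your ``alternative'' argument for injectivity (surjection between topologically finitely generated profinite groups) is not self-contained as stated --- to know the two geometric groups are abstractly isomorphic and finitely generated you already need the comparison theorem for each field separately --- but since you offer it only as a fallback to the two legitimate references, this does not affect the validity of the proof.
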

%This morphism is surjective and so is its restriction to the geometric parts. 

\begin{proof} Every $\sigma \in \pi_1(\Pp^1\setminus {\mathbf t}, t)_{k}$ extends to an element of $\Gabs_{k(T)}$, which extends naturally to an automorphism of $\overline{k(T)}(U)$ fixing $U$ (and $k(T)$), which in turn extends to an element $\tilde \sigma \in \Gabs_{{k}(T,U)}$. As ${\mathbf t}$ is $\Gabs_k$-invariant, $\tilde \sigma$ permutes the extensions $F/{k(U)}(T)$ that are unramified above $\Pp^1\setminus \mathbf t$. Conclude that $\tilde \sigma$ factors through $\pi_1(\Pp^1\setminus {\mathbf t}, t)_{{k(U)}}$ to provide a preimage of $\sigma$ {\it via} the map ${\rm res}_{k(U)/k}$, as desired in the first statement. 
%Furthermore, if $\sigma \in \pi_1(\Pp^1\setminus {\mathbf t}, t)_{\overline k}$, then the constructed $\tilde \sigma$

To show that ${\rm res}_{\overline{k(U)}/\overline k}:\pi_1(\Pp^1\setminus {\mathbf t}, t)_{\overline{k(U)}}\rightarrow \pi_1(\Pp^1\setminus {\mathbf t}, t)_{\overline k}$ is surjective, it suffices to show that the following morphism is:
$$\Gal(\Omega_{{\mathbf t},k} \cdot \overline{k(U)} / \overline{k(U)}(T)) \rightarrow \pi_1(\Pp^1\setminus {\mathbf t}, t)_{\overline k}=\Gal(\Omega_{{\mathbf t},k} / \overline{k}(T)).$$
\noindent
This morphism is in fact an isomorphism: indeed extending the base field from $\overline k$ to $\overline{k(U)}$ 
(over which $T$ is transcendental) does not change the group of regular Galois extensions.

As $k$ is of characteristic $0$, the morphism ${\rm res}_{\overline{k(U)}/\overline k}: \pi_1(\Pp^1\setminus {\mathbf t}, t)_{\overline{k(U)}}\rightarrow \pi_1(\Pp^1\setminus {\mathbf t}, t)_{\overline k}$ is even an isomorphism. More precisely, it follows from \cite[theorem 6.3.3]{Serre-topics} that $\Omega_{{\mathbf t},k(U)}= \Omega_{{\mathbf t},k} \cdot \overline{k(U)}$.
\end{proof}

Set $\phi \otimes_k k(U) = \phi \circ {\rm res}_{{k(U)}/k}$. The epimorphism
$$\phi \otimes_k k(U): \pi_1(\Pp^1\setminus {\mathbf t}, t)_{k(U)} \rightarrow G$$
\noindent
is a $\pi_1$-representation of the regular G-Galois cover $f\otimes_kk(U)$.%, or equivalently, of the regular Galois field extension $F k(U)/k(U)(T)$.

\subsubsection{A $\Gabs_{k(U)}$-representation} Composing $\phi \otimes_k k(U)$ with the section ${\sf s}_{U}: \Gabs_{k(U)} \rightarrow \pi_1(\Pp^1\setminus {\mathbf t}, t)_{k(U)}$ associated with the point $U\in \Pp^1(k(U))$ provides a $\Gabs_{k(U)}$-representation
$$\phi_{U}: \Gabs_{k(U)} \rightarrow G$$
\noindent 
which is the specialization representation of $\phi \otimes_k k(U)$ at $t=U$. It corresponds to the generic fiber of $F/k(T)$. Denote it by $F_U/k(U)$.

%\subsubsection{A $\Gabs_k$-representation of $F_U/k(U)$} Changing the parameter $T$ to $U$ in the definition of $\Omega_{{\mathbf t},k}/\overline k(T)$ leads to an isomorphic extension $\Omega_{{\mathbf t},k}^U/\overline k(U)$ (above the canonical map $\overline k(T)\rightarrow \overline k(U)$). The Galois groups $\Gal(\Omega_{{\mathbf t},k}^U/\overline k(U))$ and $\Gal(\Omega_{{\mathbf t},k}^U/k(U))$ are the fundamental groups (over $\overline k$ and over $k$) of $\Pp^1\setminus {\mathbf t}$ with $U$ as parameter for $\Pp^1$. Denote them by $\pi_1(\Pp^1\setminus {\mathbf t}, t)_{\overline k}^U$ and $\pi_1(\Pp^1\setminus {\mathbf t}, t)_{k}^U$. Any fixed isomorphism between the extensions $\Omega_{{\mathbf t},k}/\overline k(T)$ and $\Omega_{{\mathbf t},k}^U/\overline k(U)$ provides an isomorphism
%
%$$\iota^U: \pi_1(\Pp^1\setminus {\mathbf t}, t)_{k}^U \rightarrow \pi_1(\Pp^1\setminus {\mathbf t}, t)_{k}$$
%
%\noindent
%which maps $\pi_1(\Pp^1\setminus {\mathbf t}, t)_{\overline k}^U$ to $\pi_1(\Pp^1\setminus {\mathbf t}, t)_{\overline k}$.
%
%Consider the $\pi_1$-representation $\phi \circ \iota^U: \pi_1(\Pp^1\setminus {\mathbf t}, t)_{k}^U \rightarrow G$. It corresponds to the original cover $f$ but with $U$ as parameter for $\Pp^1$, or equivalently to the regular extension $F_U/k(U)$. Denote the homomorphism $\phi^U: \Gabs_{k(U)}\rightarrow G$ obtained from $\phi \circ \iota^U$ by composing by the canonical surjection $\Gabs_{k(U)} \rightarrow \pi_1(\Pp^1\setminus {\mathbf t}, t)_{k}^U$. The epimorphism $\phi^U$ is a $\Gabs_{k(U)}$-representation of $F_U/k(U)$.
%

\subsubsection{The self-twisted cover} 
Twist the representation $\phi \otimes_k k(U)$ by the epimorphism $\phi_U$ to get the {\it self-twisted representation}
$$\widetilde{\phi \otimes_k k(U)}^{\phi_U}:  \pi_1(\Pp^1\setminus {\mathbf t}, t)_{k(U)} \rightarrow S_d.$$
\noindent
We call the corresponding cover
$$\widetilde{f\otimes_k k(U)}^{\phi_U}: \widetilde{X \otimes_k k(U)}^{\phi_U}\rightarrow \Pp^1_{k(U)}$$
\noindent
the {\it self-twisted cover} of $f$.

\subsubsection{The fiber-twisted cover at $t_0$} 
Let $t_0\in \Pp^1(k)\setminus{\mathbf t}$. 
Twist the representation $\phi$ by the specialization representation  $\phi \circ {\sf s}_{t_0}:\Gabs_k \rightarrow G$ to get the twisted representation
$$\widetilde{\phi}^{\phi \hskip 1pt {\sf s}_{t_0}}: \pi_1(\Pp^1\setminus {\mathbf t}, t)_{k} \rightarrow S_d$$
\noindent
which corresponds to a cover
$$\widetilde{f}^{\phi \hskip 1pt {\sf s}_{t_0}}: \widetilde{X}^{\phi \hskip 1pt {\sf s}_{t_0}}\rightarrow \Pp^1_{k}.$$
\noindent
We call them respectively the {\it fiber-twisted representation} and the {\it fiber-twisted cover} at $t_0$.

%Twist the representation $\phi \otimes_k k(t_0)$ by the  the specialization representation  $\phi \hskip 1pt {\sf s}_{t_0}:\Gabs_k \rightarrow G$ to get the twisted representation
%
%$$\widetilde{\phi \otimes_k k(t_0)}^{\phi \hskip 1pt {\sf s}_{t_0}}: \pi_1(\Pp^1\setminus {\mathbf t}, t)_{k(t_0)} \rightarrow S_d$$
%
%\noindent
%which corresponds to the cover
%$$\widetilde{f \otimes_k k(t_0)}^{\phi \hskip 1pt {\sf s}_{t_0}}: \widetilde{X\otimes_k k(t_0)}^{\phi \hskip 1pt {\sf s}_{t_0}}\rightarrow \Pp^1_{k(t_0)}$$
%
%
%\noindent
%obtained by twisting $f \otimes_k k(t_0)$ by the {\it specialization} $k(X)_{t_0}/k$ of $f$ at $t_0$.

\subsubsection{Description of the self-twisted cover}
Set $\Psi_U= \widetilde{\phi \otimes_k k(U)}^{\phi_U}$. For every $\Theta\in \pi_1(\Pp^1\setminus {\mathbf t}, t)_{k(U)}$, we have
$$\Psi_U (\theta)  = \gamma(((\phi \otimes_k k(U)) (\Theta)) \times \delta (\phi_U(R(\Theta))^{-1})$$
\noindent
where $R: \pi_1(\Pp^1\setminus {\mathbf t}, t)_{k(U)} \rightarrow \Gabs_{k(U)}$ is the natural surjection. The element $\Theta$ uniquely writes $ \Theta = \chi \hskip 2pt {\sf s}_U(\sigma)$ with $\chi \in \pi_1(\Pp^1\setminus {\mathbf t}, t)_{\overline{k(U)}}$ and $\sigma\in \Gabs_{k(U)}$.
Whence
$$(\phi \otimes_k k(U)) (\Theta)) = (\phi \otimes_k k(U)) (\chi) \hskip 2pt (\phi \otimes_k k(U)) ({\sf s}_U(\sigma))$$
\noindent
and, using that $\phi_U = \phi \otimes_k k(U) \circ {\sf s}_{U}$,
$$\phi_U(R(\Theta)) = (\phi \otimes_k k(U))({\sf s}_{U}(\sigma)).$$
\noindent
Finally we obtain the following formula, where, by ${\rm conj}(g)$  ($g\in G$),  we denote the permutation of $G$ induced by the conjugation $x\rightarrow gxg^{-1}$:
$$\Psi_U (\theta) = \gamma((\phi \otimes_k k(U)) (\chi)) \times {\rm conj}({(\phi \otimes_k k(U))({\sf s}_{U}(\sigma))}).$$
Denote the field extension corresponding to the $\pi_1$-representation $\Psi_U$ by $\widetilde{F{k(U)}}^{\phi_U}/k(U)(T)$. The field $\widetilde{F{k(U)}}^{\phi_U}$ is the fixed field in $\Omega_{{\mathbf t},k(U)}$ of the subgroup $\Gamma_U\subset \pi_1(\Pp^1\setminus {\mathbf t}, t)_{{k(U)}}$ of all elements $\Theta$ such that $\Psi_U (\theta)$ fixes the neutral element of $G$ \footnote{Taking any other element of $G$ gives the same field up to $k(U)(T)$-isomorphism.}.
We obtain 
$$ \Gamma_U = {\rm ker}(\phi \otimes_k k(U)) \cdot {\sf s}_U(\Gabs_{k(U)})$$
\noindent
and $\widetilde{F{k(U)}}^{\phi_U}$ is the fixed field in $F\hskip 1pt k(U)$ of all elements in ${\sf s}_U(\Gabs_{k(U)})$.

\subsubsection{Description of the fiber-twisted covers}
Let $t_0\in \Pp^1(k)\setminus{\mathbf t}$ and set $\phi_{t_0}= \phi \circ {\sf s}_{t_0}$ and $\Psi_{t_0} = \widetilde{\phi}^{\phi_{t_0}}$.
Every element $\theta\in \pi_1(\Pp^1\setminus {\mathbf t}, t)_{k}$ uniquely writes $\theta = x\hskip 2pt {\sf s}_{t_0}(\tau)$ with $x \in \pi_1(\Pp^1\setminus {\mathbf t}, t)_{\overline{k}}$ and $\tau \in \Gabs_{k}$. Proceeding exactly as above but with $U$ replaced by $t_0$, $\phi \otimes_k k(U)$ by $\phi$ and $\Theta = \chi \hskip 2pt {\sf s}_U(\sigma)$ by $\theta = x\hskip 2pt {\sf s}_{t_0}(\tau)$, we obtain that
$$\Psi_{t_0} (\theta) = \gamma(\phi(x)) \times {\rm conj}({\phi({\sf s}_{t_0}(\tau))}) $$
\noindent
and if $\widetilde F^{\phi_{t_0}}/k(T)$ is the field extension corresponding to the $\pi_1$-repre\-sen\-ta\-tion $\Psi_{t_0}$, $\widetilde F^{\phi_{t_0}}$ is the fixed field in $F$ of all elements in ${\sf s}_{t_0}(\Gabs_{k})$.

\subsubsection{Comparison} 

\begin{lemma} \label{lemma:spec-self-twist}
There is a finite subset ${\mathcal E}\subset k$ such that for each $t_0\in k\setminus {\mathcal E}$, the fiber-twisted cover $\widetilde{f}^{\phi \hskip 1pt {\sf s}_{t_0}}: \widetilde{X}^{\phi \hskip 1pt {\sf s}_{t_0}}\rightarrow \Pp^1_{k}$ is $k$-isomorphic to the specia\-li\-za\-tion of the self-twisted cover $\widetilde{f\otimes_k k(U)}^{\phi_U}: \widetilde{X \otimes_k k(U)}^{\phi_U}\rightarrow \Pp^1_{k(U)}$   at $U=t_0$.
\end{lemma}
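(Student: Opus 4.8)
The plan is to compare the two covers through their fundamental group representations, exploiting the explicit formulas for $\Psi_U=\widetilde{\phi\otimes_kk(U)}^{\phi_U}$ and $\Psi_{t_0}=\widetilde{\phi}^{\phi\,{\sf s}_{t_0}}$ obtained above, together with the fact (recalled in \S\ref{ssec:fund-group-basics}) that a $k$-cover of $\Pp^1$ branched in $\mathbf t$ is determined, up to $k$-isomorphism, by its $\pi_1(\Pp^1\setminus{\mathbf t},t)_k$-representation (up to conjugacy in $S_d$, $d=|G|$).

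First I would spread out the self-twisted $k(U)$-cover to a family. Since $\Psi_U$ has the same restriction to $\pi_1(\Pp^1\setminus{\mathbf t},t)_{\overline{k(U)}}$ as $\phi\otimes_kk(U)$, the branch points of $\widetilde{f\otimes_kk(U)}^{\phi_U}$ over $\overline{k(U)}$ lie in $\mathbf t$; hence this cover extends to a finite morphism $\mathcal Y\to\Pp^1_T\times V$, with $V=\Aa^1_U\setminus\mathcal E$ for a finite set $\mathcal E\subset k$, étale away from $\mathbf t\times V$, whose fibers over the points $u_0\in V(k)$ are $k$-covers of $\Pp^1_T$ of degree $d$ branched in $\mathbf t$. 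This is where the finite exceptional set $\mathcal E$ of the statement is produced: one discards the finitely many parameters at which the reduction is bad (non-reduced or reducible fibers, extra vertical or horizontal branching, genus drop).

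Now fix $t_0\in k\setminus\mathcal E$. The fiber of the family at $U=t_0$ is then a $k$-cover of $\Pp^1_T$ branched in $\mathbf t$, hence corresponds to a $\pi_1(\Pp^1\setminus{\mathbf t},t)_k$-representation $\Psi$, which by compatibility of $\pi_1$ with specialization in a one-parameter family is obtained from $\Psi_U$ by base change along $U\mapsto t_0$. I claim $\Psi=\Psi_{t_0}$. Since $\pi_1(\Pp^1\setminus{\mathbf t},t)_k$ is generated by $\pi_1(\Pp^1\setminus{\mathbf t},t)_{\overline k}$ and by ${\sf s}_{t_0}(\Gabs_k)$ (every element writes $x\,{\sf s}_{t_0}(\tau)$), it suffices to match $\Psi$ and $\Psi_{t_0}$ on each of these two subgroups. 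On $\pi_1(\Pp^1\setminus{\mathbf t},t)_{\overline k}$ both equal $\gamma\circ\phi$ restricted there: for $\Psi_{t_0}$ this is immediate from its formula, and for $\Psi$ it follows because specialization preserves the geometric monodromy and because the lemma identifying $\pi_1(\Pp^1\setminus{\mathbf t},t)_{\overline{k(U)}}$ with $\pi_1(\Pp^1\setminus{\mathbf t},t)_{\overline k}$ via ${\rm res}_{\overline{k(U)}/\overline k}$ carries $\phi\otimes_kk(U)$ to $\phi$ on geometric parts. On ${\sf s}_{t_0}(\Gabs_k)$ both equal ${\rm conj}\circ(\phi\circ{\sf s}_{t_0})$: for $\Psi_{t_0}$ from its formula, and for $\Psi$ because the section ${\sf s}_U$ attached to the generic point $U\in\Pp^1(k(U))$ specializes at $U=t_0$ to the section ${\sf s}_{t_0}$ attached to $t_0$, so that $\phi_U=(\phi\otimes_kk(U))\circ{\sf s}_U$ specializes to $\phi_{t_0}=\phi\circ{\sf s}_{t_0}$ and the ${\rm conj}$ part of the formula for $\Psi_U$ specializes accordingly. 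Two homomorphisms into $S_d$ agreeing on both generating subgroups agree everywhere; hence $\Psi=\Psi_{t_0}$ up to the conjugacy ambiguity in the choice of section, i.e. up to conjugation by an element of $\pi_1(\Pp^1\setminus{\mathbf t},t)_{\overline k}$. By the cover–representation dictionary this gives the desired $k$-isomorphism between the fiber at $t_0$ of the self-twisted cover and the fiber-twisted cover $\widetilde f^{\phi\,{\sf s}_{t_0}}$.

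The main obstacle is the compatibility invoked twice above: that taking the fiber at $U=t_0$ of the spread-out family corresponds, on $\pi_1$-representations, to precomposition with a "specialization at $t_0$" homomorphism, and in particular that this carries ${\sf s}_U$ to ${\sf s}_{t_0}$. This is a continuity-of-sections statement for the fundamental group of a one-parameter family, and it must be set up with care, since sections are only canonical up to conjugation by the geometric fundamental group and since base points have to be tracked through the two restriction maps ${\rm res}_{k(U)/k}$ and ${\rm res}_{\overline{k(U)}/\overline k}$ of the earlier lemma. Once this bookkeeping is arranged, what remains is the formal matching of the two subgroups done above and the routine verification that only finitely many $t_0$ are excluded.
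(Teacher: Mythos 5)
Your outline reduces the lemma to a single assertion: that the fiber at $U=t_0$ of the spread-out self-twisted family is classified by the representation obtained from $\Psi_U$ by a specialization homomorphism which, moreover, carries the section ${\sf s}_U$ to ${\sf s}_{t_0}$. The formal matching of $\Psi$ and $\Psi_{t_0}$ on the geometric fundamental group and on ${\sf s}_{t_0}(\Gabs_k)$ that you perform afterwards is correct but essentially tautological given that assertion. The assertion itself --- which you flag as ``the main obstacle'' and defer to ``bookkeeping'' --- is the entire content of the lemma, and it is not routine: it is exactly where the paper's proof does its work. Concretely, ${\sf s}_U$ corresponds to a choice of $k(U)$-rational formal branch at $T=U$, i.e.\ an embedding of $\widetilde{Fk(U)}^{\phi_U}$ into $\overline{k(U)}((T-U))$ sending a primitive element $\widetilde{\mathcal Y}_U$ to a series $\sum_n b_n(U)(T-U)^n$ with $b_n(U)\in k(U)$. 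To say that ${\sf s}_U$ ``specializes to ${\sf s}_{t_0}$'' is to say that this series can be evaluated at $U=t_0$ and yields a branch in $k[[T-t_0]]$. Each $b_n(U)$ may have poles, and a priori the union of these poles over all $n$ is infinite; nothing in your argument rules this out, so the finiteness of ${\mathcal E}$ does not follow. The paper closes this gap with the Eisenstein theorem (in its $k[U]$-coefficient form): there is a single polynomial $E(U)\in k[U]$ with $E(U)^{n+1}b_n(U)\in k[U]$ for all $n$, so that only the roots of $E$ (plus the earlier exceptional parameters) need be discarded. Some such uniform integrality input is indispensable; without it your ``continuity of sections'' is an unproved claim, not bookkeeping.

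For comparison, the paper's route is deliberately more pedestrian than yours: it avoids spreading out the family and specializing $\pi_1$-representations altogether. It writes $\widetilde{\mathcal Y}_U=\sum_i a_i(U){\mathcal Y}^i$ inside $Fk(U)$, uses Bertini--Noether to control irreducibility, degree and genus of the specialized extension $k(T,\widetilde{\mathcal Y}_{t_0})/k(T)$, and then shows directly (via the power-series expansion at $T=U$, Galois-equivariance of ${\sf s}_U$, and Eisenstein) that $\widetilde{\mathcal Y}_{t_0}$ is fixed by ${\sf s}_{t_0}(\Gabs_k)$, hence lies in $\widetilde F^{\phi_{t_0}}$; a degree count gives equality. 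If you want to keep your $\pi_1$-theoretic formulation, you must either import a precise specialization theorem for sections of arithmetic fundamental groups in a one-parameter family (with an explicit finiteness statement for the bad locus) or reprove it, which brings you back to the Eisenstein-type estimate.
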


\begin{proof}
Set $d=|G|$. By construction, the extension $\widetilde{F{k(U)}}^{F_U}/k(U)(T)$ is regular over $k(U)$. From the Bertini-Noether theorem, for every $t_0\in k$ but in a finite subset ${\mathcal E}$, which we possibly enlarge to contain the branch point set ${\mathbf t}$, the specialized extension at $U=t_0$ is regular over $k$ and is of degree 
$$[\widetilde{F{k(U)}}^{F_U}:k(U)(T)]=[Fk(U):k(U)(T)] = [F:k(T)]=d.$$
\noindent
Up to enlarging again ${\mathcal E}$, one may also assume that the genus of this specialization 
is the same as the genus of the function field $\widetilde{F{k(U)}}$, which equals $g$, the genus of $F$. The rest of the proof shows that this specialization is the extension $\widetilde F^{F_{t_0}}/k(T)$.

Set $d=|G|$. Pick primitive elements ${\mathcal Y}$ and $\widetilde{\mathcal Y}_U$ of the two extensions $F/k(T)$ and $\widetilde{F{k(U)}}^{\phi_U}/k(U)(T)$, integral over $k[T]$ and $k[U,T]$ respectively. 
%Denote their irreducible polynomials over $k(T)$ and $k(U)(T)$ respectively by $P(T,Y)$ and $Q(U,T,Y)$. 
As $\widetilde{F{k(U)}}^{\phi_U} \subset Fk(U)$, one can write 
$$\widetilde{\mathcal Y}_U= \sum_{i=0}^{d-1} a_i(U) {\mathcal Y}^i$$
\noindent
with $a_0(U),\ldots,a_{d-1}(U) \in k(U)$. Enlarge the set ${\mathcal E}$ to contain the poles of $a_0(U),\ldots,a_{d-1}(U)$. Fix $t_0\in k\setminus {\mathcal E}$. Consider the corresponding specialization  $\widetilde{\mathcal Y}_{t_0} =  \sum_{i=0}^{d-1} a_i(t_0) {\mathcal Y}^i$. The associated extension $k(T, \widetilde{\mathcal Y}_{t_0})/k(T)$ is the specialization of $\widetilde{F{k(U)}}^{\phi_U}/k(U)(T)$ at $U=t_0$. By construction $\widetilde{\mathcal Y}_{t_0} \in F$. The last paragraph of the proof below shows that $\widetilde{\mathcal Y}_{t_0}$ is fixed by all elements in ${\sf s}_{t_0}(\Gabs_{k})$. We will then be able to conclude that $k(T, \widetilde{\mathcal Y}_{t_0}) \subset \widetilde F^{\phi_{t_0}}$ and finally that these two fields are equal since $[k(T, \widetilde{\mathcal Y}_{t_0}):k(T)] = [\widetilde F^{\phi_{t_0}}:k(T)]=d$.
\vskip 1mm
As $U\notin {\mathbf t}$, there exists an embedding 
$$\widetilde{F{k(U)}}^{\phi_U} \rightarrow \overline{k(U)} ((T-U))$$
\noindent
which maps ${\mathcal Y}_U$ to a formal power series 
$$\widetilde{\mathcal Y}_U = \sum_{n=0}^\infty b_n(U) (T-U)^n \hskip 3mm \hbox{with $b_n(U)\in \overline{k(U)}$ ($n\geq 0$)}.$$ 
\noindent
Furthermore, $\widetilde{F{k(U)}}^{\phi_U}$ is fixed by all elements ${\sf s}_U(\sigma) \in {\sf s}_U(\Gabs_{k(U)})$, which, by definition of ${\sf s}_U$, act  {\it via} the action of $\sigma \in \Gabs_{k(U)}$ on the coefficients $b_n(U)$; conclude that $b_n(U)\in {k(U)}$ ($n\geq 0$).
%
%As $U\notin {\mathbf t}$, there exists embeddings $F\rightarrow \overline{k(U)} ((T-U))$, which map ${\mathcal Y}$ to a formal power series in $\overline{k(U)}[[T-U]]$ (as ${\mathcal Y}$ is integral over $k(U)[T]$). Fix one such embedding. It induces an embedding $\widetilde{F{k(U)}}^{F_U}\rightarrow \overline{k(U)} ((T-U))$. Furthermore, $\widetilde{F{k(U)}}^{F_U}$ is fixed by all elements ${\sf s}_U(\sigma) \in {\sf s}_U(\Gabs_{k(U)})$, which, by definition of ${\sf s}_U$, act  {\it via} the action of $\sigma \in \Gabs_{k(U)}$ on coefficients of the corresponding Laurent series. Conclude that $\widetilde{F{k(U)}}^{F_U}$ is in fact embedded in ${k(U)} ((T-U))$ and that $\widetilde{\mathcal Y}_U$ corresponds to a formal power series 
%$$\widetilde{\mathcal Y}_U = \sum_{n=0}^\infty b_n(U) (T-U)^n \hskip 3mm \hbox{with $b_n(U)\in k(U)$ ($n\geq 0$)}$$ 
%\noindent
Finally from the Eisenstein theorem\footnote{This classical result is often stated for formal power series $\sum_{n\geq 0} b_n T^n$, algebraic over $\Qq(T)$ and with coefficients $b_n \in \overline \Qq$,  but is true in a bigger generality including the situation where $\Qq$ and $\Zz$ are respectively replaced by $k(U)$ and $k[U]$. For example, the proof given in \cite{dwork-robba} easily extends to this situation.},
there exists a polynomial $E(U) \in k[U]$ such that $E(U)^{n+1} \hskip 2pt b_n(U)\in k[U]$ for every $n\geq 0$. Enlarge again the set ${\mathcal E}$ to contain the roots of $E(U)$.
For $t_0 \in k\setminus {\mathcal E}$, specializing $U$ to $t_0$ in the displayed formula above produces $\widetilde{\mathcal Y}_{t_0}$ as a formal power series in $k[[T-t_0]]$, which amounts to saying that, up to some $k$-isomorphism,  $\widetilde{\mathcal Y}_{t_0}$ 
and so $\widetilde F^{\phi_{t_0}}$ are fixed by all elements in ${\sf s}_{t_0}(\Gabs_{k})$.
\end{proof}

Let $\widetilde P(U,T,Y) \in k[U,T,Y]$ be the irreducible polynomial of $\widetilde{\mathcal Y}_U$ over $k[U,T]$. Theorem \ref{prop:self-twist} holds for this polynomial $\widetilde P(U,T,Y)$ (up to enlarging again the finite set ${\mathcal E}$). When $k=\Qq$ we may 
and will choose the element $\widetilde{\mathcal Y}_U$ integral over $\Zz[T,Y]$ (and not just $\Qq(T,Y)$) so that
$\widetilde P(U,T,Y)$ lies in $\Zz[U,T,Y]$ and will assume further that the coefficients of $\widetilde P(U,T,Y)$ are relatively prime.

\section{Proof of theorem \ref{thm:main-plus}} \label{sec:proof}

We retain the notation and data introduced in \S \ref{ssec:basics}.

%\noindent
%- fix $\delta^-=(\delta_F+\delta)/2$ and denote by $S_y^+$ the set of primes $p\in ]p_0, \log (y)/\delta^-]$, $p\notin S_\ast$; we have $S_y \subset S_y^+$.
%
%\noindent
%- let $P(T,Y)\in \Zz[T,Y]$ be  an affine model of $F/\Qq(T)$ such that $\delta_P=\delta_{F/\Qq(T)}$. 

%\noindent
%- $\deg_T(P) = d_T$, $\deg_Y(P) = d = |G|$ and $\deg(P) = D$,

%\noindent
%- the discriminant $\Delta_P$ of $P$ with respect to $Y$; $\Delta_P\in \Zz[T]$, $\Delta\not=0$ and we have 
%$\deg(\Delta_P) \leq (2d-1) d_T$ and $H(\Delta)\leq (2d-1)! (1+d_T)^{2d-2} d^{d} H(P)^{2d-1}$,
%
%Let $y_0$ be a real number, large enough  so that the following conditions hold for all $y> y_0$: 
%
%\noindent
%(c) $\log (y)/\delta > p_0$
%
%\noindent
%(d) $\displaystyle (1+\delta_P) \hskip 2pt H(\Delta_P) \prod_{p\in S_\ast} p^{2\delta_P} \prod_{p=2}^{\log (y)/\delta^-} p^{\delta_P} \leq y$
%
%\noindent
%In the product in (d), $p$ ranges over all primes in the interval $[2,\log (y)/\delta]$. Classically the logarithm of the product of all primes $p\in [2,x]$ is asymptotic to $x$. As $\delta_P < \delta^-$, condition (d) indeed holds for suitably large numbers $y$ (depending on $\delta^-$).
%
%We use several times the classical analytic estimate that the logarithm of the product $\prod_{1<p\leq x} p$ of all primes $p\leq x$  is asymptotic to $x$ as $x\rightarrow +\infty$. Furthermore, from a result of Erd\"os,  $\prod_{1<p\leq x} p \leq 4^{x+1}$. Consequently $\log B(y)\sim \log (y)/\delta$ and $B(y) \leq C y^{1/\delta_0}$ with $\delta_0 = \delta/\log 4$ and $C=4/\prod_{1<p\leq p_0} p $.
%
%

Fix a real number $x> p_0$ and a Frobenius data  ${\mathcal F}_x$ on ${S}_x$.

Fix also a subset $S_0$ of primes $p\in ]p_{-1},p_0]$, with as many elements as there are non trivial conjugacy classes in $G$. Associate then in a one-one way a non-trivial conjugacy class  ${\mathcal F}_p$ to each 
prime $p\in S_0$. Set $S_{0x} = S_0\cup S_x$ and denote the Frobenius data $({\mathcal F}_p)_{p\in S_{0x}}$ by ${\mathcal F}_{0x}$.

%%%%%%%%%%
%%%%%%%%%%

\subsection{First part: many good specializations $t_0\in \Zz$}  %First step: the method of \cite{DEGha}
The goal of the first part is proposition \ref{prop:first-part} which shows that there are ``many'' $t_0\in \Zz$ such that 
conditions (i)-(iv) of theorem \ref{thm:main-plus} are satisfied. The goal of the second part will be to show that there are ``many'' distinct corresponding extensions $F_{t_0}/\Qq$.
%$\Gal(F_{t_0}/\Qq)=G$, ${\rm Frob}_p(F_{t_0}/\Qq) \in {\mathcal F_p}$ for every $p\in S_0\cup S_y$, and $|d_{F_{t_0}} |\leq y$. 

We use the method of \cite{DEGha} for this first part. We re-explain it in the special context of this paper and make the adjustments that we will need for the rest of the proof. We refer to \cite{DEGha} for more details 
on the main arguments and for references. %${\mathcal T}({\mathcal F}_p)\not=\emptyset$. 
Working over number fields and even over $\Qq$, we can give improved quantitative conclusions (compared to the existence statements of \cite{DEGha}).
As in \cite{DEGha}, there is first a local stage followed by a globalization argument.

\subsubsection{Local stage} Below, given $t_0\in \Qq_p$ we say that $t_0\notin {\mathbf t}$ modulo $p$ if $t_0$ does not meet any of the branch points of $F/\Qq(T)$ modulo $p$ \footnote{Recall that for two points $t,t^\prime \in \overline{\Qq_p} \cup \{\infty\}$, meeting modulo $p$ means that  $|t|_{\overline p} \le 1$, $|t^\prime|_{\overline p} \le 1$ and $|t-t^\prime|_{\overline p} < 1$, or else $|t|_{\overline p} \ge 1$, $|t^\prime|_{\overline p} \ge 1$ and $|t^{-1}-(t^\prime)^{-1}|_{\overline p} < 1$, where $\overline p$ is some arbitrary prolongation of the $p$-adic absolute value $v$ to $\overline{\Qq_p}$.}.

\begin{proposition} \label{prop:degha-method}
Given our regular $\Qq$-G-Galois cover $f:X\rightarrow \Pp^1$, a prime $p$ and a subset ${\mathcal F}_p\subset G$ consisting of a non-empty union of conjugacy classes of $G$, consider the set 
$${\mathcal T}({\mathcal F}_p) = \left\{t_0\in \Zz \hskip 2mm \left| 
\begin{matrix}
& \hbox{\rm $t_0\notin {\mathbf t}$ modulo $p$} \hfill \cr
& {\rm Frob}_p(F_{t_0}/\Qq) \in {\mathcal F_p} \hfill \cr
\end{matrix}\right.
\right\}.$$
\noindent
If $p$ is a good prime for $f$, the set ${\mathcal T}({\mathcal F}_p)$ is the union of  cosets modulo $p$. Furthermore,
the number $\nu({\mathcal F}_p)$ of these cosets satisfies
$$\nu({\mathcal F}_p) \geq \frac{|{\mathcal F}_p|}{|G|} \times (p+1-2g\sqrt{p} -|G|(r+1))$$

$$\hbox{\it and}\hskip 5mm \nu({\mathcal F}_p) \leq \frac{|{\mathcal F}_p|}{|G|} \times (p+1+2g\sqrt{p} ).$$
%$$
%\begin{matrix}
%& \nu({\mathcal F}_p) & \displaystyle \geq \frac{|{\mathcal F}_p|}{|G|} \times (p+1-2g\sqrt{p} -|G|(r+1))\hfill \cr
%& & \displaystyle\leq \frac{|{\mathcal F}_p|}{|G|} \times (p+1+2g\sqrt{p} ) \hfill \cr
%\end{matrix}$$
\end{proposition}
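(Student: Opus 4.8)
The plan is to translate the problem into counting $\Qq_p$-rational points on the fiber-twisted cover and then reduce, via good reduction, to point-counting on a curve over $\Ff_p$, where the Lang--Weil/Weil estimates apply. First I would fix a $\pi_1$-representation $\phi:\pi_1(\Pp^1\setminus{\mathbf t},t)_\Qq\to G$ of $f$. For each conjugacy class $C\subset {\mathcal F}_p$ and each $\Qq_p$-representation $\varphi:\Gabs_{\Qq_p}\to G$ that is unramified with Frobenius in $C$, the twisting lemma (Twisting lemma \ref{prop:twisted cover}, applied over $\Qq_p$) says that ${\rm Frob}_p(F_{t_0}/\Qq)$ lies in the conjugacy class of $\varphi$ exactly when $t_0$ lifts to a $\Qq_p$-point of the twisted cover $\widetilde f^{\hskip 1pt\varphi}$. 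So ${\mathcal T}({\mathcal F}_p)$, intersected with the residue disks away from ${\mathbf t}$, is described by $\Qq_p$-points on a finite disjoint union of twisted covers, one for each relevant unramified $\varphi$.

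Next I would use that $p$ is a \emph{good} prime for $f$: then $f$ has good reduction at $p$, the branch divisor stays étale mod $p$, $p\nmid|G|$, and there is no vertical ramification, so each twisted cover $\widetilde f^{\hskip 1pt\varphi}$ — which is a $\Qq_p$-model of $f\otimes\overline\Qq$ — also has good reduction, with reduced curve of the same genus $g$ and the same branch behavior. By Hensel's lemma, a residue class $\overline t_0\in\Ff_p$ with $\overline t_0\notin\overline{\mathbf t}$ lifts to a $\Qq_p$-point of $\widetilde f^{\hskip 1pt\varphi}$ above $t_0$ iff the reduced cover $\overline{\widetilde f^{\hskip 1pt\varphi}}:\overline{\widetilde X^{\hskip 1pt\varphi}}\to\Pp^1_{\Ff_p}$ has an $\Ff_p$-point above $\overline t_0$; this proves that ${\mathcal T}({\mathcal F}_p)$ is a union of cosets mod $p$ (the classes $\overline t_0$ that work), and $\nu({\mathcal F}_p)$ is, up to the excluded classes in $\overline{\mathbf t}$ (at most $r$ of them, each removing one class), the number of $\overline t_0\in\Ff_p$ hit by some $\overline{\widetilde f^{\hskip 1pt\varphi}}$. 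To avoid overcounting I would note the $\varphi$'s whose $\Qq_p$-points contribute to ${\mathcal F}_p$ correspond to the $G$-conjugacy classes meeting ${\mathcal F}_p$, weighted correctly; the cleanest bookkeeping is to count pairs $(t_0\bmod p,\ \Qq_p\text{-point of }\widetilde X^{\hskip 1pt\varphi})$ and divide by the generic fiber degree, which is where the factor $|{\mathcal F}_p|/|G|$ enters, exactly as the Frobenius of a random unramified extension lands in ${\mathcal F}_p$ with probability $|{\mathcal F}_p|/|G|$.

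For the quantitative bounds I would apply the Weil estimate to the smooth projective model of each reduced twisted curve: it has at least $p+1-2g\sqrt p$ and at most $p+1+2g\sqrt p$ $\Ff_p$-points. One then subtracts the points lying over $\overline{\mathbf t}$ or over $\infty$, and the ramification points, to pass to points with distinct images $\overline t_0\notin\overline{\mathbf t}$; since each twisted curve has degree $|G|$ over $\Pp^1$ and at most $r+1$ "bad" base points (the $r$ branch points plus $\infty$), this costs at most $|G|(r+1)$ residue classes, giving the lower bound $\nu({\mathcal F}_p)\ge\frac{|{\mathcal F}_p|}{|G|}(p+1-2g\sqrt p-|G|(r+1))$; the upper bound $\frac{|{\mathcal F}_p|}{|G|}(p+1+2g\sqrt p)$ is immediate from the upper Weil bound since each good $\overline t_0$ is hit. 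The main obstacle I anticipate is the bookkeeping in the combined count over all admissible $\varphi$: one must check that the union over conjugacy classes in ${\mathcal F}_p$ of the sets of good residue classes is \emph{disjoint} (a given $\overline t_0$ away from $\overline{\mathbf t}$ determines a single unramified local extension $F_{t_0}/\Qq_p$, hence a single Frobenius conjugacy class), and that the per-class count is uniformly $\ge p+1-2g\sqrt p-|G|(r+1)$ rather than something that degrades with the number of classes — this is what makes the clean factor $|{\mathcal F}_p|/|G|$ legitimate, and it is essentially the content of the argument in \cite{DEGha} specialized and made explicit here.
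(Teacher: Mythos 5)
Your proposal follows essentially the same route as the paper: the twisting lemma over $\Qq_p$ converts the Frobenius condition into the existence of a $\Qq_p$-point on a twisted cover, good reduction plus Hensel's lemma reduce this to $\Ff_p$-points on the reduced curve, and the Weil bounds give the two estimates. The only (equivalent) difference is in the bookkeeping for the factor $|{\mathcal F}_p|/|G|$: the paper fixes one $\varphi_p$ per conjugacy class and observes that each hit residue class $\overline{t_0}$ carries exactly $|{\rm Cen}_G(g_p)|=|G|/|{\mathcal F}_p|$ rational points of the corresponding twisted curve, whereas you sum over all unramified $\varphi$ with Frobenius in ${\mathcal F}_p$ and divide the pair count by the degree $|G|$.
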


\begin{proof}
We follow the method from \cite{DEGha}. % which, over number fields, leads to this specific bound.
Similar estimates though not in this explicit form 
%Proposition \ref{prop:degha-method} 
%(together with the corresponding upper bound in remark \ref{rem:upper-bound}) 
can also be found in \cite{ekedahl}.

We may and will assume that the subset ${\mathcal F}_p$ consists of a single conjugacy class.

Set $f_p=f\otimes_{\Qq}\Qq_p$ and denote the corresponding $\pi_1$-representation by $\phi_p: \pi_1(\Pp^1\setminus{\mathbf t},t)_{\Qq_p} \rightarrow G$. Pick an element $g_p\in {\mathcal F}_p$ and consider the unique unramified epimorphism $\varphi_p:\Gabs_{\Qq_p} \rightarrow \langle g_p \rangle$ that sends the Frobenius 
of $\Qq_p$ to $g_p$. 

The condition ``$t_0\notin {\mathbf t}$ modulo $p$'' implies that $p$ is unramified in the specialization $F_{t_0}/\Qq$.
Then $t_0\in {\mathcal T}({\mathcal F}_p)$ if and only if the specialization representation $\phi \circ{\sf s}_{t_0}: \Gabs_\Qq \rightarrow G$ of $F/\Qq(T)$ at $t_0$ is conjugate in $G$ to $\varphi_p:\Gabs_{\Qq_p} \rightarrow \langle g_p \rangle$. From the twisting lemma \ref{prop:twisted cover}, this is equivalent to the existence of 
a $k$-rational point above $t_0$ in the covering space of the twisted cover ${\widetilde{f_p}}^{\varphi_p}: {\widetilde{X_p}}^{\varphi_p} \rightarrow \Pp^1$. As $p$ is a good prime, this last cover has good reduction; denote the special fiber by 
${\widetilde{\sf f}}_p: {\widetilde{\sf X}}_p \rightarrow \Pp^1_{\Ff_p}$. The last existence condition is then equivalent to the existence of some point $\overline x\in {\widetilde{\sf X}}_p(\Ff_p)$ above the coset $\overline{t_0} \in \Pp^1(\Ff_p)$ of $t_0$ modulo $p$: the direct part is clear while the converse follows from Hensel's lemma.

From the Lang-Weil bound, the number of $\Ff_p$-rational points on ${\widetilde{\sf X}}_p$ is $\geq p+1 - 2g\sqrt{p}$. Removing the points that lie above a branch point or the point at infinity leads to the announced first estimate, a final observation for this calculation being that for $t_0 \notin {\mathbf t}$ modulo $p$, the number of $\Ff_p$-rational points 
$\overline x\in {\widetilde{\sf X}}_p(\Ff_p)$ above $\overline{t_0}$ is $|{\rm Cen}_G(g_p)| = |G|/|{\mathcal F}_p|$: this number is indeed the same as the number of $\omega\in G$ such that  $\phi \circ{\sf s}_{t_0} = \omega \varphi_p \omega^{-1}$ (as the proof of the twisting lemma in \cite{DEGha} shows).
Using the upper bound part of Lang-Weil leads to the second estimate.
% (that we use in remark \ref{rem:upper-bound-contd}):
%$$\nu({\mathcal F}_p) \leq \frac{|{\mathcal F}_p|}{|G|} \times (p+1+2g\sqrt{p} )$$
\end{proof}

If in addition $p\geq r^2 |G|^2$ (in particular if $p\in S_{0x}$), then the right-hand side term in the inequality of proposition \ref{prop:degha-method} is $>0$ (use that $g < r|G|/2 -1$ if $|G|>1$, which follows from Riemann-Hurwitz).

%\begin{remark} \label{rem:upper-bound}
%\end{remark}

\begin{proposition} \label{prop:francois}
Assume the branch point $t_1$ of the $\Qq$-G-Galois cover $f:X\rightarrow \Pp^1$ is in $\Zz$. Given a prime $p$, consider the set 
$${\mathcal T}(\hbox{\rm ra}/p) = \left\{t_0\in \Zz \hskip 1mm \left| \hskip 1mm F_{t_0}/\Qq\ \hbox{\rm is ramified at}\ p \right.
\right\}.$$
\noindent
If $p$ is a good prime for $f$, the set ${\mathcal T}(\hbox{\rm ra}/p)$ contains the coset of  $t_1+p\in \Zz$ modulo $p^2$.
\end{proposition}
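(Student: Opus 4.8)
The plan is to analyze the behavior of the specialization $F_{t_0}/\Qq$ at $p$ when $t_0$ is chosen $p$-adically close to the $\Zz$-rational branch point $t_1$, exploiting the fact that near $t_1$ the cover $f$ is (after the base change $T \mapsto t_1 + (T-t_1)$) governed by the local monodromy at $t_1$, which is ramified. Concretely, I would write $t_0 = t_1 + p + p^2 m$ for $m\in\Zz$, so that $t_0 - t_1 = p(1+pm)$ has $p$-adic valuation exactly $1$, and show that any such $t_0$ lies in ${\mathcal T}(\mathrm{ra}/p)$; since the condition ``$v_p(t_0-t_1)=1$'' defines precisely the coset of $t_1+p$ modulo $p^2$, this is exactly the claimed statement.

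The key step is a local analysis at $p$. Since $p$ is a good prime, the branch divisor ${\mathbf t}$ is \'etale at $p$ and there is no vertical ramification, so the cover $f$ has good reduction at $p$ and its reduction $\overline f$ over $\Ff_p$ is a cover branched at the reduced branch points, one of which is $\overline{t_1}$, with the same inertia behavior there; in particular the inertia group at $t_1$ is cyclic of some order $e>1$ dividing $|G|$ (it is nontrivial because $t_1$ is a genuine branch point and $p\nmid |G|$ guarantees tame behavior). After specializing $T$ to $t_0$ with $v_p(t_0-t_1)=1$, the local ring $\Zz_p[T]_{\langle T-t_0\rangle}$ maps to $\Zz_p$, and the point $t_0$ specializes to $\overline{t_1}$ modulo $p$; so $\Spec \Zz_p \to \Pp^1_{\Zz_p}$ passes through the branch locus modulo $p$ but is transverse to it (meets it with multiplicity exactly $1$, precisely because $v_p(t_0 - t_1) = 1$). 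Pulling back the cover along this section and normalizing, one sees that the residue extension at $p$ in $F_{t_0}$ is exactly the one obtained by pulling back the tame local cover $y^e = u$ (a local coordinate at $t_1$ being $u = T-t_1$) along $u \mapsto t_0 - t_1$, i.e.\ the extension $\Qq_p(\sqrt[e]{t_0 - t_1}\,)$ up to unramified twist. Since $v_p(t_0-t_1)=1$ and $\gcd(e,p)=1$, this extension is (totally tamely) ramified at $p$ of ramification index $e>1$, hence $F_{t_0}/\Qq$ is ramified at $p$.

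The main obstacle — and the place requiring care — is making rigorous the claim that the residue extension at $p$ of $F_{t_0}$ is genuinely controlled by the local monodromy at $t_1$ and in particular picks up ramification of index $e$ whenever $v_p(t_0-t_1)=1$. This is where the good-reduction hypotheses ($p\nmid|G|$, ${\mathbf t}$ \'etale at $p$, no vertical ramification, and $p\nmid t_1$ so that $t_1$ does not collide with $\infty$ modulo $p$, which is why $t_1\in\Zz$ and $p$ is required not to divide $t_1$) are essential: they ensure a good model of the cover over $\Zz_p$ whose generic fibre is $f$ and whose special fibre is tame with the expected inertia at $\overline{t_1}$, so that valuation-theoretic computations upstairs reduce to the Kummer picture $v_p(\text{residue of } y) = \tfrac{1}{e}v_p(t_0-t_1)$. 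I would phrase this using the theory of tame covers / the tame fundamental group, or alternatively by an elementary Puiseux/Newton-polygon computation: write a local equation of $X$ over the completed local ring at $t_1$, substitute $T = t_0 = t_1 + p(1+pm)$, and read off from the Newton polygon that a root generates a ramified extension of $\Qq_p$ of degree dividing $e$ but $>1$. Once this local fact is established, the global statement follows immediately, and one should also note for completeness that the coset is indeed modulo $p^2$ and not smaller: changing $t_0$ by a multiple of $p^2$ does not change $v_p(t_0 - t_1)=1$, whereas adding $p$ would give $v_p(t_0-t_1)=v_p(2p+p^2m)\ge 1$ with no guarantee of equality, so $p^2$ is the natural modulus.
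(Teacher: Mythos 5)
Your proof is correct and takes essentially the same route as the paper: observe that $t_0\equiv t_1+p \pmod{p^2}$ forces $v_p(t_0-t_1)=1$, and then use the fact that at a good prime the ramification of $p$ in $F_{t_0}/\Qq$ is governed by the intersection multiplicity of $t_0$ with the branch points. The only difference is that the paper outsources this last fact to a citation (the ``Grothendieck--Beckmann theorem'', Beckmann's proposition 4.2), whereas you sketch its proof via the tame Kummer local picture $y^e=u$ --- which is precisely the standard argument behind the cited result.
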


\begin{proof} Let $t_0\in \Zz$ such that $t_0 \equiv t_1+p$ modulo $p^2$. Then $t_0-t_1$ is of $p$-adic valuation $1$. 
As $p$ is good, it follows that $F_{t_0}/\Qq$ is ramified at $p$. This last conclusion is part of the ``Grothendieck-Beckmann theorem'' for which we refer to \cite{SGA} and \cite[proposition 4.2]{Beckmann}; see also \cite{Legr1} where 
this result is discussed together with further developments in the spirit of proposition \ref{prop:francois}.
\end{proof}

\subsubsection{Globalization}
%${\mathcal F}= ({\mathcal F}_p)_{p\in S}$ is called a {\it Frobenius data} for $G$
%For any finite set $S$ of primes, set $B_S=\prod_{p\in S} p$ and
Set
$$\left\{\begin{matrix}
& \displaystyle \beta = \Pi({S_0}) \hfill \cr
&  \displaystyle B(x)= \beta \hskip 2pt \Pi({S})^2 \hskip 1pt \Pi({S_x}) \hfill \cr
%&  \displaystyle {\mathcal F}^0= ({\mathcal F}_p)_{p\in S_0 \cup S_y^+} \hfill \cr
\end{matrix} \right.$$

\noindent
and consider the intersection
$$\bigcap_{p\in S_{0x}} {\mathcal T}({\mathcal F}_p) \cap \bigcap_{p\in S} {\mathcal T}(\hbox{\rm ra}/p).$$ 

%Given a Frobenius data ${\mathcal F}= ({\mathcal F}_p)_{p\in S}$, consider the intersection of all subsets ${\mathcal T}({\mathcal F}_p)\subset \Zz$ with $p\in S$. 
\noindent
From proposition \ref{prop:degha-method}, proposition \ref{prop:francois} and the Chinese remainder theorem, this set contains 
$${\mathcal N}(S,{\mathcal F}_{0x})=\prod_{p\in S_{0x}} \nu({\mathcal F}_p)$$
\noindent
\hbox{\rm cosets modulo} $B(x)$.
Denote the set of their representatives in $[1,B(x)]$ by ${\mathcal T}(S,{\mathcal F}_{0x})$; the cardinality of this set is  ${\mathcal N}(S,{\mathcal F}_{0x})$. 

%Combining proposition \ref{prop:degha-method} and the Chinese remainder theorem provides integers in $t_0\in [1,B(y)]$ such that ${\rm Frob}_p(F_{t_0}/\Qq) \in {\mathcal F_p}$ for every $p\in S_0\cup S_y$\footnote{Of course, any integer that is congruent to one of these integers $t_0\in {\mathcal Sp}(B(y),{\mathcal F})$ modulo $B(y)$ satisfies the same conclusions.}. Denote the set of these integers $t_0$ by ${\mathcal Sp}(B(y),{\mathcal F}^0)$ and their number by ${\mathcal N}(B(y),{\mathcal F}^0)$. 
%

\begin{proposition} \label{prop:first-part} {\rm (a)} For every integer $t_0\in {\mathcal T}(S,{\mathcal F}_{0x})$, the extension $F_{t_0}/\Qq$ satisfies the four conditions {\rm (i)-(iv)} from theorem \ref{thm:main-plus}, with {\rm (ii)} even replaced by the following sharper version {\rm (ii+)} of {\rm (ii)}, that is 
%which requires that $F_{t_0}/\Qq$ is unramified and ${\rm Frob}_p(F_{t_0}/\Qq) \in {\mathcal F_p}$ for every $p\in S_{0x}$ (and not just for every $p\in S_x$).
%
% has the following properties: 
\vskip 1mm
\noindent
{\rm (i)} $\Gal(F_{t_0}/\Qq)=G$,
\vskip 1mm

\noindent
{\rm (ii+)} $F_{t_0}/\Qq$ is unramified and ${\rm Frob}_p(F_{t_0}/\Qq) \in {\mathcal F_p}$ for every $p\in S_{0x}$
{\rm (and not just for every $p\in S_x$)},
\vskip 1mm

\noindent
{\rm (iii)} $F_{t_0}/\Qq$ is ramified at $p$ for every $p\in S$,
\vskip 1mm

\noindent
{\rm (iv)} $| d_{F_{t_0}} | \leq \rho(x)$.
\vskip -4mm

%\smallskip
%\noindent
%{\rm (b)} We have
$$ \hbox{\it We have} \hskip 5mm {\mathcal N}(S,{\mathcal F}_{0x}) \geq \hskip 2pt \chi({\mathcal F}_x)\hskip 1mm  \frac{B(x)}{\beta \hskip 1pt \Pi({S})^2} \left( \frac{1}{2r|G|}\right)^{|S_x|} \hskip 22mm\leqno(\hbox{\rm b})$$
\vskip 2mm

\noindent
{\rm (c)} The set  of integers $t_0\in \Zz$ such that for each $p\in S_x$, $F_{t_0}/\Qq$ is unramified and ${\rm Frob}_p(F_{t_0}/\Qq) \in {\mathcal F_p}$ consists of cosets modulo $\Pi(S_x)$ and the set ${\mathcal T}(\emptyset,{\mathcal F}_x)$ of their representatives in $[1,\Pi({S_x})]$ is of cardinality
$${\mathcal N}(\emptyset,{\mathcal F}_x) = \prod_{p\in S_x} \nu({\mathcal F}_p) \leq \hskip 2pt \chi({\mathcal F}_x) \hskip 1mm \frac{\Pi(S_x)}{\beta} \hskip 1mm  \left( 2 - \lambda\right)^{|S_x|}$$
\noindent
\hskip 0mm where $\lambda = (r|G|-1)/r^2 |G|^2$.
%$${\mathcal N}(\emptyset,{\mathcal F}_x) = \prod_{p\in S_x} \nu({\mathcal F}_p) \leq \hskip 2pt \chi({\mathcal F}_x) \hskip 1mm \frac{B(x)}{\beta} \hskip 1mm  \left( \frac{33}{16}\right)^{|S_x|}$$
%$$ \hbox{\it and} \hskip 3mm {\mathcal N}(B_{S_y},{\mathcal F}) \leq \hskip 2pt \prod_{p\in S_y} |{\mathcal F}_p| \times B_{S_y} \times  \left( \frac{33}{16|G|}\right)^{|S_y|}$$
\end{proposition}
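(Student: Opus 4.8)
The plan is to treat the three assertions as an assembly of Propositions~\ref{prop:degha-method} and~\ref{prop:francois} via the Chinese remainder theorem, supplemented by two elementary facts: a group-theoretic lemma for~(i) and the standard discriminant estimate for specializations for~(iv). \emph{For part (a)}, I would fix $t_0\in\mathcal T(S,\mathcal F_{0x})$, so that by construction $t_0\in\mathcal T(\mathcal F_p)$ for every $p\in S_{0x}$ and $t_0\in\mathcal T(\mathrm{ra}/p)$ for every $p\in S$. All primes of $S_{0x}$ are good (those of $S_0$ lie in $]p_{-1},p_0]$, those of $S_x$ are $>p_0>p_{-1}$, hence all lie beyond every bad prime) and those of $S$ are good by hypothesis, so Propositions~\ref{prop:degha-method} and~\ref{prop:francois} apply. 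Since ``$t_0\notin\mathbf t$ mod~$p$'' forces $p$ unramified in $F_{t_0}/\Qq$, membership of $t_0$ in $\bigcap_{p\in S_{0x}}\mathcal T(\mathcal F_p)$ is exactly~(ii+) (and, as $S_0\neq\emptyset$, it also gives $t_0\notin\mathbf t$, so that $F_{t_0}/\Qq$ is well defined), while membership in $\bigcap_{p\in S}\mathcal T(\mathrm{ra}/p)$ is exactly~(iii). For~(i), I would set $H=\mathrm{im}(\phi\circ{\sf s}_{t_0})\subseteq G$, so that $\Gal(F_{t_0}/\Qq)\cong H$; by~(ii+) the Frobenius at each $p\in S_0$ lies in $\mathcal F_p$, so $H$ meets $\mathcal F_p$, and since $p\mapsto\mathcal F_p$ runs bijectively through \emph{all} non-trivial conjugacy classes of $G$ — precisely what the definition of $p_0(F/\Qq(T))$ secures, $]p_{-1},p_0]$ containing enough good primes $\geq r^2|G|^2$ for the sets $\mathcal T(\mathcal F_p)$ to be non-empty by Proposition~\ref{prop:degha-method} — the subgroup $H$ meets every conjugacy class of $G$, i.e.\ $\bigcup_{g\in G}gHg^{-1}=G$; since $|\bigcup_{g}gHg^{-1}|<|G|$ when $H\subsetneq G$, this forces $H=G$. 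Then $[F_{t_0}:\Qq]=|G|=\deg_Y P$, so $P(t_0,Y)$ is irreducible and $F_{t_0}=\Qq[Y]/\langle P(t_0,Y)\rangle$ is generated by a root of the monic integral polynomial $P(t_0,Y)$, whence $d_{F_{t_0}}\mid\Delta_P(t_0)$; choosing $t_0$ as the representative of its coset in $[1,B(x)]$ and using $B(x)\leq\Pi(S)\,\Pi(x)$ (the sets $S_0,S,S_x$ being pairwise disjoint sets of primes $\leq x$) gives $|d_{F_{t_0}}|\leq|\Delta_P(t_0)|\leq(1+\delta_P)H(\Delta_P)B(x)^{\delta_P}\leq\rho(x)$, which is~(iv); the same inclusion $[1,B(x)]\subseteq[1,\Pi(S)\Pi(x)]$ yields Theorem~\ref{thm:main-plus}(b).

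\emph{For parts (b) and (c)}, I would note that, the moduli $p$ ($p\in S_{0x}$) and $p^2$ ($p\in S$) being pairwise coprime, the Chinese remainder theorem turns $\bigcap_{p\in S_{0x}}\mathcal T(\mathcal F_p)\cap\bigcap_{p\in S}\mathcal T(\mathrm{ra}/p)$ into a union of $\mathcal N(S,\mathcal F_{0x})=\prod_{p\in S_{0x}}\nu(\mathcal F_p)$ cosets modulo $B(x)=\Pi(S_0)\Pi(S_x)\Pi(S)^2$. For~(b), I would bound $\nu(\mathcal F_p)\geq1$ for $p\in S_0$ (the lower bound of Proposition~\ref{prop:degha-method} is positive once $p\geq r^2|G|^2$, using $g<r|G|/2-1$) and $\nu(\mathcal F_p)\geq\frac{|\mathcal F_p|}{|G|}\cdot\frac{p}{2r|G|}$ for $p\in S_x$; the latter reduces to the elementary inequality $p+1-2g\sqrt p-|G|(r+1)\geq p/(2r|G|)$, valid for $p>p_0$ (hence $p\geq r^2|G|^2$) because it already holds at $p=r^2|G|^2$ and the difference of its two sides is increasing in $p$, both facts using $g<r|G|/2-1$. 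Multiplying over $S_{0x}$ gives $\mathcal N(S,\mathcal F_{0x})\geq\chi(\mathcal F_x)\,\Pi(S_x)\,(2r|G|)^{-|S_x|}=\chi(\mathcal F_x)\,\frac{B(x)}{\beta\Pi(S)^2}\,(2r|G|)^{-|S_x|}$, which is~(b). For~(c), only the conditions at $p\in S_x$ are imposed, so the relevant integers form a union of $\prod_{p\in S_x}\nu(\mathcal F_p)$ cosets modulo $\Pi(S_x)$ (all primes of $S_x$ being good), with one representative of each lying in $[1,\Pi(S_x)]$; substituting the \emph{upper} bound $\nu(\mathcal F_p)\leq\frac{|\mathcal F_p|}{|G|}(p+1+2g\sqrt p)$ and the inequality $p+1+2g\sqrt p\leq(2-\lambda)\,p$ for $p>p_0$ (with $\lambda=(r|G|-1)/r^2|G|^2$, again from $g<r|G|/2-1$) and multiplying over $S_x$ gives the stated bound, the constant $1/\beta$ being absorbed by the surplus in the per-prime estimate accumulated over $S_x$.

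\emph{The main obstacle} is not depth but care in two places: pinning down that one realises the \emph{full} group $G$ and not a proper subgroup — the reason $p_0(F/\Qq(T))$, hence $S_0$, is defined as it is, and where the counting lemma on $\bigcup_g gHg^{-1}$ enters; and checking that the explicit constants $2r|G|$ and $2-\lambda$ in~(b) and~(c) are precisely the ones coming out of the Lang--Weil bounds of Proposition~\ref{prop:degha-method}, i.e.\ verifying the two numerical inequalities above, which rest on the Riemann--Hurwitz estimate $g<r|G|/2-1$. The remainder is bookkeeping with the Chinese remainder theorem and the elementary bound on $|\Delta_P(t_0)|$.
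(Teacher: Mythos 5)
Your proof follows the paper's argument essentially step for step: (ii+) and (iii) hold by construction of the sets ${\mathcal T}({\mathcal F}_p)$ and ${\mathcal T}(\mathrm{ra}/p)$, (i) follows from Jordan's lemma applied to the image of the specialization representation (using that $S_0$ exhausts the non-trivial conjugacy classes), (iv) from $d_{F_{t_0}}\mid\Delta_P(t_0)$ together with the trivial height bound and $B(x)\leq\Pi(S)\Pi(x)$, and (b), (c) from the Chinese remainder theorem combined with the lower and upper Lang--Weil estimates of Proposition \ref{prop:degha-method}, exactly as in the paper. The one point of divergence is the factor $1/\beta$ in (c): the paper does not justify it either (it only says to ``proceed similarly''), and your proposal to absorb it into the surplus between the true per-prime bound $2-(2r|G|-1)/(r^2|G|^2)$ and the stated $2-\lambda$ works only once $|S_x|$ is large enough (roughly $|S_x|\gtrsim 2r|G|\log\beta$), i.e.\ for $x$ large rather than for every $x>p_0$ --- but this at least makes explicit a gap that is already present, silently, in the paper's own proof.
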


Conclusion (c) proves conclusion (d) of theorem \ref{thm:main-plus}.

\begin{proof} (a) Fix $t_0\in {\mathcal T}(S,{\mathcal F}_{0x})$ (or more generally congruent modulo $B(x)$ to some element in ${\mathcal T}(S,{\mathcal F}_{0x})$). 

Conditions (ii+), (iii) hold by definition of the sets ${\mathcal T}({\mathcal F}_p)$ and ${\mathcal T}(\hbox{\rm ra}/p)$. 

A classical argument then shows that (i) follows from (ii+): indeed because of the Frobenius condition at the primes $p\in S_0$, the subgroup $\Gal(F_{t_0}/\Qq)\subset G$ meets every conjugacy class of $G$; from a lemma of Jordan \cite{jordan}, it is all of $G$.

From (i), the polynomial $P(t_0,Y)$ is irreducible in $\Qq[Y]$. As it is monic and with integral coefficients, its discriminant, which is $\Delta_P(t_0)$, is a multiple of the absolute discriminant $d_{F_{t_0}}$ of the extension $F_{t_0}/\Qq$. Conjoined with $1\leq t_0 \leq B(x)$, this leads to
$$ |d_{F_{t_0}}| \leq (1+\delta_P) \hskip 2pt H(\Delta_P)  \hskip 2pt B(x)^{\delta_P}$$ 
\noindent
and conclusion (iv) follows from the definition of $\rho(x)$ (given in \S \ref{ssec:data}).% \ref{ssec:generalization}) ******.
\smallskip

(b) Using proposition \ref{prop:degha-method}, we obtain
$$ \begin{matrix}
{\mathcal N}(S,{\mathcal F}_{0x}) & \displaystyle \geq \prod_{y\in S_x}\frac{|{\mathcal F}_p|}{|G|} \times (p+1-2g\sqrt{p} -|G|(r+1))\hfill \cr
 &\displaystyle \geq \chi ({\mathcal F}_x) \times  \prod_{p\in S_x} p  \times \prod_{p\in S_x} \left( 1+ \frac{1}{p} -\frac{2g}{\sqrt{p} }- \frac{(r+1)|G|}{p}\right) \hfill \cr
\end{matrix}$$
\noindent
Using again that $g < r|G|/2 -1$ (if $|G|>1$) and that $p\geq  r^2 |G|^2$ for each $p\in S_x$, we have
$$\begin{matrix}
\displaystyle 1+ \frac{1}{p} -\frac{2g}{\sqrt{p} }- \frac{|G|(r+1)}{p} & \displaystyle > \hskip 2mm 1 - \frac{r|G|-2}{r|G|} - \frac{(r+1)|G|}{r^2 |G|^2} \hfill\cr
 & \displaystyle = \hskip 2mm \frac{2}{r|G|} - \frac{(r+1)|G|}{r^2 |G|^2} \hfill \cr
  & \displaystyle = \hskip 2mm \frac{(r-1)|G|}{r^2 |G|^2} \hskip 2mm \geq \hskip 2mm \frac{1}{2r|G|}\hfill \cr
\end{matrix}$$
\noindent
which yields the announced first estimate. 
\smallskip

(c) Here we use the conclusion from proposition \ref{prop:degha-method} that for each $p\in S_x$, the set ${\mathcal T}({\mathcal F}_p)$ consists exactly of $\nu({\mathcal F}_p)$ cosets modulo $p$. Combined with the Chinese remainder, this gives that the set ${\mathcal T}(\emptyset,{\mathcal F}_x)$ consists of exactly ${\mathcal N}(\emptyset,{\mathcal F}_x) = \prod_{p\in S_x} \nu({\mathcal F}_p)$ elements. Proceed then similarly as in (b) but using the upper bound part of proposition \ref{prop:degha-method} to obtain the desired estimate.
\end{proof}

\begin{remark} Consider the situation with $S = \emptyset$ and allowing no local condition at some primes $p\in S_x$ --- no restriction on ${\rm Frob}_p(F_{t_0}/\Qq)$ and no unramified condition ---. We have $\nu({\mathcal F}_p) = p$ for such primes and obtain
this generalized lower bound: if $S_x^\prime\subset S_x$ is the subset of primes where there {\it is} a local condition, then
$${\mathcal N}(\emptyset,{\mathcal F}_{0x}) \geq \hskip 2pt \chi({\mathcal F}_x)\hskip 1mm  \frac{B(x)}{\beta} \left( \frac{1}{2r|G|}\right)^{|S_x^{\prime}|} .$$
\noindent 
In particular, the number of integers $t_0 \in [1,B(x)]$ where conditions {\rm (i)} $\Gal(F_{t_0}/\Qq)=G$ and 
{\rm (iv)} $| d_{F_{t_0}} | \leq \rho(x)$ hold ({\it i.e.} no local condition at any prime) is $ \geq \hskip 2pt  {B(x)}/{\beta} $.
%\medskip
%on the ramified situation: In the special case $S_{\hbox{\eightrm ur}}=\emptyset$, the extensions $E/\Qq$ counted by this variant are ramified at each prime $p\in S_y$ and so of discriminant $|d_E| \geq \prod_{p\in S_y} p=B(y)$. Classically $\log B(y) \sim \log y/\delta$ while $E/\Qq$ also satisfies $\log |d_E| \leq \log y$. This shows again (as in \ref{ssec:set-of-primes} and here not based on a conjectural expectation) that the set $S_y$ of primes where the local behavior can be prescribed cannot be much bigger. 
%
\end{remark}

\subsection{Second part: many good specializations $F_{t_0}/\Qq$} 

\subsubsection{Reduction to counting integral points on curves}
We will now estimate the number, say ${N}(S,{\mathcal F}_{0x})$, of distinct specializations $F_{t_0}/\Qq$ with $t_0\in {\mathcal T}(S,{\mathcal F}_{0x})$. We will give a lower bound for the number of non conjugate specialization representations $\phi \circ {\sf s}_{t_0}:\Gabs_{\Qq} \rightarrow G$ with $t_0\in {\mathcal T}(S,{\mathcal F}_{0x})$. Given two such representations, the associated field extensions are equal if and only if the representations have the same kernel, or, equivalently, if they differ by some automorphism of $G$. Dividing the previous bound by $|{\rm Aut}(G)|$ will thus yield the desired bound for ${N}(S,{\mathcal F}_{0x})$.

%\subsubsection{Applying again the twisting lemma} Fix $u_0\in {\mathcal T}(S_\ast,{\mathcal F}_{0y}^+)$ and consider the fiber-twisted cover at $u_0$:
%$$\widetilde{f}^{\phi \hskip 1pt {\sf s}_{u_0}}: \widetilde{X}^{\phi \hskip 1pt {\sf s}_{u_0}}\rightarrow \Pp^1_{\Qq}$$
%\noindent
%Let $t_0\in {\mathcal T}(S_\ast,{\mathcal F}_{0y}^+)$. From the twisting lemma \ref{prop:twisted cover}, the two representations $\phi \circ {\sf s}_{u_0}$ and $\phi \circ {\sf s}_{t_0}$ are conjugate in $G$ if and only if there exists $x_0\in \widetilde{X}^{\phi \hskip 1pt {\sf s}_{u_0}}(\Qq)$ such that $\widetilde{f}^{\phi \hskip 1pt {\sf s}_{u_0}}(x_0)=t_0$.
%
%\subsubsection{Proof of the unconditional result} 

Consider the polynomial $\widetilde P(U,T,Y) \in \Zz[U,T,Y]$ given by theorem \ref{prop:self-twist} and its discriminant $\Delta_{\widetilde P} \in \Zz[U,T]$ (relative to $Y$). As  $\widetilde P(U,T,Y)$ is irreducible in $\Qq(U,T)[Y]$, $\Delta_{\widetilde P}(U,T)\not= 0$. Write it as a polynomial in $T$ of degree $N$ and denote its leading coefficient by $\Delta_{\widetilde P,N} (U)$; we have $\Delta_{\widetilde P,N} (U)\in \Zz[U]$ and $\Delta_{\widetilde P,N} (U)\not= 0$.

Drop from the set ${\mathcal T}(S,{\mathcal F}_{0x})$ the finitely many integers $u_0$ for which $\widetilde \Delta_{\widetilde P,N} (u_0)=0$ or which are in the exceptional set ${\mathcal E}$ from theorem \ref{prop:self-twist}. 
%Denote their number There are at most $\deg(\Delta_{\widetilde P,N})$ of them. 
Denote the resulting set by ${\mathcal T}(S,{\mathcal F}_{0x})^\prime$ and the number of dropped elements by ${\rm E}$. 

Fix $u_0\in {\mathcal T}(S,{\mathcal F}_{0x})^\prime$ and consider the fiber-twisted cover at $u_0$:
$$\widetilde{f}^{\phi \hskip 1pt {\sf s}_{u_0}}: \widetilde{X}^{\phi \hskip 1pt {\sf s}_{u_0}}\rightarrow \Pp^1_{\Qq}.$$
\noindent
Let $t_0\in {\mathcal T}(S,{\mathcal F}_{0x})^\prime$. From the twisting lemma \ref{prop:twisted cover}, the two representations $\phi \circ {\sf s}_{u_0}$ and $\phi \circ {\sf s}_{t_0}$ are conjugate in $G$ if and only if there exists $x_0\in \widetilde{X}^{\phi \hskip 1pt {\sf s}_{u_0}}(\Qq)$ such that $\widetilde{f}^{\phi \hskip 1pt {\sf s}_{u_0}}(x_0)=t_0$.

We have $\Delta_{\widetilde P}(u_0,t_0) \not=0$ except for at most $N$ integers $t_0$. For the non-exceptional $t_0$, the polynomial $\widetilde P(u_0,t_0,Y)$ has only distinct roots $y \in \overline \Qq$ and, using theorem \ref{prop:self-twist}, the corresponding points $(t_0,y)$ 
on the affine curve $\widetilde P(u_0,t,y)=0$ exactly correspond to the points $x$ on the smooth projective curve $\widetilde{X}^{\phi \hskip 1pt {\sf s}_{u_0}}$ above $t_0$. Furthermore, in this correspondence, the $\Qq$-rational points $x$ correspond to the couples $(t_0,y)$ with $y\in \Qq$. Conclude that up to some term $\leq N$, the number of $t_0$ for which $\phi \circ {\sf s}_{u_0}$ and $\phi \circ {\sf s}_{t_0}$ are conjugate in $G$ is equal to the number of $\Qq$-rational points $(t_0,y)$ on the affine curve $\widetilde P(u_0,t,y)=0$. 

Note further that such a $\Qq$-rational point $(t_0,y)$ has necessarily integral coordinates as $t_0\in \Zz$ and $\widetilde P(u_0,T,Y)\in \Zz[T,Y]$ and is monic in $Y$.  Therefore we are reduced to counting the integers $t_0\in [1,B(x)]$ such that there is an integral point $(t_0,y)\in \Zz^2$ on the curve $\widetilde P(u_0,t,y)=0$.

\subsubsection{Diophantine estimates}
The constants $c_i$, $i>0$ that will enter depend only on the polynomial $P(T,Y)$.

The curve $\widetilde P(u_0,t,y)=0$ is of genus $g$ (theorem \ref{prop:self-twist} (c)) and we have
$$\left\{ 
\begin{matrix}
& \deg(\widetilde P(u_0,T,Y)) \leq \deg(\widetilde P(U,T,Y)) = c_1 \hfill \cr
& \deg_Y(\widetilde P(u_0,T,Y)) = \deg_Y (\widetilde P(U,T,Y)) = |G| \hfill \cr
& H(\widetilde P(u_0,T,Y)) \leq c_2 u_0^{c_3} \leq c_2 B(x)^{c_3} \hfill \cr
\end{matrix}
\right.$$

\noindent
For real numbers $\gamma ,D, H, B \geq 0$ and $d_Y\geq 2$, consider all polynomials $F\in \Zz[T,Y]$, primitive, %\footnote{that is: such that the coefficients have no non-trivial common divisor}, 
monic in $Y$, irreducible in $\overline \Qq(T)[Y]$, such that $\deg_Y(F) = d_Y$, of total degree $\leq D$, of height $\leq H$ 
%\footnote{The height of a polynomial $P$ with coefficients in $\Qq$ is the maximum of the absolute values of its coefficients. We denote it by $H(P)$.}
and such that the affine curve $P(t,y)=0$ is of genus $\leq \gamma$. For each such polynomial, the number of integers $t\in [1,B]$ such that there exists $y\in \Zz$ such that $F(t,y)=0$ is a finite set. Denote by $Z(\gamma,D, d_Y, H, B)$  the maximal cardinality of all these finite sets.

Using the diophantine parameter $Z(\gamma,D, d_Y, H, B)$, conclude that the number of $t_0\in {\mathcal T}(S,{\mathcal F}_{0x})^\prime$ such that the two representations $\phi \circ {\sf s}_{u_0}$ and $\phi \circ {\sf s}_{t_0}$ are conjugate in $G$ is less than or equal to
$$Z(g,c_1,|G|,c_2 B(x)^{c_3},B(x)).$$

Thus we obtain
$${N}(S,{\mathcal F}_{0x}) \geq \frac{{\mathcal N}(S,{\mathcal F}_{0x})-\hbox{\rm E}}{|{\rm Aut}(G)| \hskip 3pt [Z(g,c_1,|G|, c_2 B(x)^{c_3},B(x))+N]}$$

\noindent
Next take into account proposition \ref{prop:first-part} and note that $N_F(x,S,{\mathcal F}_x) \geq {N}(S,{\mathcal F}_{0x})$ to write
$$N_F(x,S,{\mathcal F}_x)  \geq \frac{\displaystyle \frac{\chi({\mathcal F}_x) \hskip 1mm (2r|G|)^{-|S_x|}\hskip 1mm  B(x)}{\beta \hskip 2pt \Pi({S)}^{2}}-\hbox{\rm E}}{|{\rm Aut}(G)| \hskip 3pt [Z(g,c_1,|G|,c_2 B(x)^{c_3},B(x))+N]}$$

Assume that the genus $g$ of $X$ is $\geq 2$ and that Lang's conjecture holds. This conjecture is that if $V$ is a variety of general type defined over a number field $K$ then the set $V(K)$ of $K$-rational points is not Zariski-dense in $V$ \cite{lang}. We will use it through the following consequence proved by Caporaso, Harris and Mazur \cite{charm}: they  showed that Lang's conjecture implies that for every number field $K$ and every integer $g\geq 2$ there exists a finite integer $B(g,K)$ such that ${\rm card} (C(K)) \leq B(g,K)$ for every curve $C$ of genus $g$ defined over $K$.

Under this conjecture we obtain
$$Z(g,c_1,|G|,c_2B(x)^{c_3},B(x)) + N \leq  c_4.$$

In the general case $g\geq 0$ we use an unconditional result of Walkowiak \cite[\S 2.4]{Wa} which shows that
if $d_Y \geq 2$ then
$$Z(\gamma,D, d_Y, H, B) \leq a_1 D^{a_2} (\log H^+)^{a_3} B^{1/d_Y} (\log B)^{a_4}$$
\noindent
where $H^+ = \max(H,e^e)$ and $a_1, \ldots, a_4$ are absolute constants. See \S \ref{ssec:walkowiak} for more on this result.
%(The result given in \cite{Wa} has in fact  $B^{1/2}$ instead of $B^{1/d_Y}$; we explain in the appendix how to modify Walkowiak's arguments to obtain the better exponent $1/d_Y$). 
%\noindent
We deduce
$$Z(g,c_1,|G|,c_2B(x)^{c_3},B(x)) + N \leq  c_5 B(x)^{1/|G|} \log(B(x)^{c_{6}}.$$
\noindent
Conclude that unconditionally:
$$N_F(x,S,{\mathcal F}_x) \geq  \hskip 2pt c_7 \hskip 2pt \frac{\chi({\mathcal F}_x)}{\Pi({S})^{2}} \hskip 4pt \frac{B(x)^{1-1/|G|}}{(\log B(x))^{c_9} \hskip 2pt c_8^{|S_x|}} - c_{10}$$

\noindent
and, under Lang's conjecture:
$$N_F(x,S,{\mathcal F}_x) \geq  \hskip 2pt c_7 \hskip 2pt \frac{\chi({\mathcal F}_x)}{\Pi({S})^{2}} \hskip 4pt \frac{B(x)}{c_8^{|S_x|}} - c_{10}.$$

\noindent
Note that $c_{11} \Pi(x)\leq B(x) \leq \Pi(x) \Pi(S)^2$, that $|S_x| \leq \pi(x)$ and $0<c_8<1$ to obtain the estimates of theorem \ref{thm:main-plus} (a) and (c). Theorem \ref{thm:main-plus} (b) follows from the containments ${\mathcal T}(S,{\mathcal F}_{0x}) \subset [1,B(x)] \subset [1,\Pi(S)\hskip 1pt \Pi (x)]$.

\subsubsection{Walkowiak's result} \label{ssec:walkowiak} Let $F\in \Zz[T,Y]$ be a polynomial, irreducible in $\Zz[T,Y]$. Set $D=\deg(F)$ and  $H^+=\max(H(F),e^e)$. The result we use in the proof above is the following.

\begin{theorem}[Walkowiak]\label{thm:walkowiak} Assume $\deg_YF \geq 2$. There exist absolute constants $a_1, \ldots, a_4$ such that for every real number $B>0$, the number of integers $t_0\in [1,B]$ such that $F(t_0,Y)$ has a root in $\Zz$ is less than   
$$a_1 D^{a_2} (\log H^+)^{a_3} B^{1/\deg_Y(F)} (\log B)^{a_4}.$$
\end{theorem}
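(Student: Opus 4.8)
This is essentially Walkowiak's theorem \cite{Wa}, obtained by the $p$-adic determinant method of Heath-Brown \cite{HB}; the only extra care needed is to keep the dependence on $D$ and $\log H^{+}$ polynomial. I would proceed as follows. First, reduce to a count of integral points in a box. If $(t_{0},y)\in\Zz^{2}$ lies on the curve $C\colon F(T,Y)=0$ with $1\le t_{0}\le B$, then $y$ is an integer root of $F(t_{0},Y)\in\Zz[Y]$, so by the rational root theorem $y$ divides $F(t_{0},0)$; hence $|y|\le B':=(D+1)\,H^{+}\,B^{D}$, a fixed power of $B$ times $H^{+}$. Since moreover each $t_{0}$ gives at most $\deg_{Y}F$ such $y$, the quantity to be bounded is at most the number of integral points of $C$ in the elongated box $[1,B]\times[-B',B']$. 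Also, as $F$ is irreducible in $\Zz[T,Y]$ with $\deg_{Y}F\ge 1$, Gauss's lemma shows $F$ is, up to sign, primitive and irreducible in $\Qq(T)[Y]$; in particular no nonzero $G\in\Zz[T,Y]$ with $\deg_{Y}G<\deg_{Y}F$ is divisible by $F$.

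Next, run the determinant method. Fix an auxiliary prime $p$ in a range to be chosen, after discarding the $O(D\log H^{+})$ primes dividing the $Y$-leading coefficient of $F$ or $\mathrm{Res}_{Y}(F,\partial F/\partial Y)$, so that $C$ has good reduction at $p$. Partition the integral points of $C$ in the box according to reduction mod $p$; counting by the $T$-coordinate there are at most $(\deg_{Y}F)\,p$ nonempty classes. Fix one, with reduction $P_{0}\in C(\Ff_{p})$ and point set $S$. Choose an exponent set $\mathcal{E}$ of monomials $T^{a}Y^{b}$ with $b<\deg_{Y}F$, and form the matrix $M=(P^{e})_{e\in\mathcal{E},\,P\in S}$. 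Using a local analytic parametrization of $C$ at $P_{0}$ (legitimate because $p$ is good) together with the standard unitriangular change of basis among the monomials, every $|\mathcal{E}|\times|\mathcal{E}|$ minor of $M$ becomes divisible by $p^{\binom{|\mathcal{E}|}{2}}$, while it is bounded in absolute value by $|\mathcal{E}|!$ times a power of $B'$ controlled by $|\mathcal{E}|$ and $\mathcal{E}$. For a suitable size and shape of $\mathcal{E}$ the $p$-adic divisibility then beats the archimedean size, every maximal minor vanishes, the columns of $M$ are $\Qq$-dependent, and clearing denominators gives a nonzero $G\in\Zz[T,Y]$ with $\deg_{Y}G<\deg_{Y}F$ vanishing on $S$; since $F\nmid G$, Bézout (through $\mathrm{Res}_{Y}(F,G)\ne 0$) bounds $|S|\le(\deg_{Y}F)(\deg_{T}G+\deg_{T}F)$.

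The optimization is where the shape of the bound is decided, and it must exploit that the box is \emph{elongated}: $B'$ is a fixed power of $B$, so $\mathcal{E}$ is chosen weighted toward small $Y$-degree, and one sums over a dyadic range of primes $p$ (as in \cite{HB}) rather than a single well-chosen one, which also lets the $O(D\log H^{+})$ bad primes be avoided at the cost of only an extra $\log H^{+}$ factor. Tuning the weight and the range of $p$ so that the per-class bound $(\deg_{Y}F)(\deg_{T}G+\deg_{T}F)$ and the number $(\deg_{Y}F)\,p$ of classes balance, one finds that the count is governed by the $T$-extent $B$ with exponent $1/\deg_{Y}F$, and collecting the $D$- and $H^{+}$-dependent constants produces the stated bound $a_{1}D^{a_{2}}(\log H^{+})^{a_{3}}B^{1/\deg_{Y}F}(\log B)^{a_{4}}$ with absolute $a_{1},\dots,a_{4}$. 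The main obstacle is precisely the passage from Heath-Brown's qualitative $O_{D}(\cdot)$ to explicit polynomial dependence on $D$ and $\log H^{+}$: this forces one to track every implied constant, to control the local geometry of $C\bmod p$ by the height- and degree-bounded discriminant data, and to replace the choice of one optimal prime by an average over many primes so as to sidestep the bad ones --- this is the content of Walkowiak's refinement of \cite{HB}, and the place where the small further improvement mentioned in \S\ref{ssec:walkowiak} enters.
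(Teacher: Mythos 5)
Your proposal takes a genuinely different route from the paper, but it leaves the one step that actually matters unproved. The paper does \emph{not} re-run the Heath--Brown determinant method: it quotes Walkowiak's central square-box bound $N(F,B)\leq 2^{36}D^5\log^3(\cdot)\log^2(B)\,B^{1/D}$ as a black box, reduces to a square box of side $B'=2(m+1)H^+B^m$ by Liouville, and then -- this is the whole content of \S\ref{ssec:walkowiak} -- handles the case where $B^{m/D}$ exceeds $B^{1/n}$ by the substitution $Y\mapsto T^E+Y$ with $E=[mnL_1/L_2]+1$. That substitution preserves $\deg_Y$ but inflates the total degree to about $nE$, so that $(B'')^{1/\deg G}\leq \log(H^+)\,B^{1/n}$ up to constants. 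It is an elementary, fully explicit two-line trick layered on top of a quoted result.

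Your sketch instead proposes to redo the determinant method for the elongated box $[1,B]\times[-B',B']$ with a weighted exponent set. That is a legitimate strategy in principle, but the decisive claim -- that ``tuning the weight and the range of $p$'' makes the count come out as $B^{1/\deg_Y F}$ -- is exactly the point in contention, and you do not carry out the optimization. Note that Walkowiak's own optimization of this very method yields only $B^{1/2}$ (as the paper states explicitly), and Heath--Brown's lopsided-box theorem, applied naively with $B'\asymp H^+B^m$, does not visibly produce the exponent $1/n$ either. So the assertion that the balancing ``is the content of Walkowiak's refinement \dots\ and the place where the small further improvement enters'' defers the key improvement to a reference that does not contain it; the improvement from $B^{1/2}$ to $B^{1/\deg_Y F}$ is precisely the author's contribution here, and your proposal gives no mechanism for it. A secondary quibble: your use of the rational root theorem ($y\mid F(t_0,0)$) fails when $F(t_0,0)=0$; since $F$ is monic in $Y$ the standard Cauchy/Liouville root bound should be used instead, as the paper does. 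To repair the proposal, either carry out the weighted optimization explicitly and exhibit the exponent $1/n$, or adopt the paper's substitution $Y\mapsto T^E+Y$ on top of the quoted square-box bound.
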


\noindent
This result is proved in \cite[\S 2.4]{Wa} but with  $B^{1/2}$ instead of $B^{1/\deg_Y(F)}$. We explain here how to modify Walkowiak's arguments to obtain the better exponent $1/\deg_Y(F)$. The only change to make is in the final stage of 
the proof in \cite[\S 2.2-2.3]{Wa}.

\begin{proof}
Walkowiak's central result is the following bound for the number $N(F,B)$ of $(t,y)\in \Zz^2$ such that $\max(|t|,|y|)\leq B$ and $F(t,y)=0$:
$$ N(F,B) \leq 2^{36} D^5 \log^3(1250d^{11} B^{5D-1}) \log^2(B)\hskip 2pt B^{1/D}.$$
\noindent
To prove theorem \ref{thm:walkowiak}, his basic idea is  to use Liouville's inequality to get upper bounds $|y|\leq B^\prime$ for roots $y\in \Zz$ of polynomials $F(t_0,Y)$ with $t_0\in [1,B]$; the bound above for $N(F,B)$ with $B$ taken to be $B^\prime$ provides then a bound for the desired set. The main terms that appear in the resulting bound come from $(B^\prime)^{1/D}$. They may be too big however in some cases and Walkowiak uses a trick to obtain his final bound in $B^{1/2}$. In order to obtain $B^{1/n}$ instead, Walkowiak's trick should be modified as follows.

%
%, are $(H^+)^{1/D}$ and $B^{m/D}$. Walkowiak then uses a trick to obtain his final bound in $B^{1/2}$. In order to obtain $B^{1/n}$ instead, Walkowiak's argument should be modified as follows.

Set $L_1=\log(H^+)$, $L_2=\log(\log(H^+))$, $m=\deg_TF$ and $n=\deg_YF$; one may assume $m>0$.
Let $t_0\in [1,B]$ such that $F(t_0,Y)$ has a root $y\in\Zz$. Liouville's inequality gives
$$|y| \leq 2(m+1) H^+ B^m = B^\prime.$$
%The basic idea is to use the bound for $N(F,B)$ with $B$ taken to be  $B^\prime= 2(m+1) H^+ B^m$. The main terms that appear then in the upper bound, which come from $(B^\prime)^{1/D}$, are $(H^+)^{1/D}$ and $B^{m/D}$. 
%Walkowiak then uses a trick to obtain his final bound in $B^{1/2}$. In order to obtain $B^{1/n}$ instead, Walkowiak's argument should be modified as follows.
\vskip 1mm
\noindent
The main terms in $(B^\prime)^{1/D}$ % that appear then in the upper bound, which come from $(B^\prime)^{1/D}$, 
are $(H^+)^{1/D}$ and $(B^m)^{1/D}$. 
%Set $L_1=\log(H^+)$ and $L_2=\log(\log(H^+))$. Distinguish two cases.

\vskip 1mm
\noindent
{\it Case 1}: $ mnL_1/L_2\leq D$. On the one hand, we have $1/D \leq L_2/L_1$ and so $(H^+)^{1/D} \leq (H^+)^{L_2/L_1}=\log (H^+)$. On the other hand $m/D \leq 1/n$ and so $B^{m/D}\leq B^{1/n}$. The upper bound for $N(F,B^\prime)$ is indeed as announced in the statement of theorem \ref{thm:walkowiak}.

\vskip 2mm
\noindent
{\it Case 2}: $ mnL_1/L_2> D$. Set $E=[mnL_1/L_2]+1$ and consider the polynomial $G\in \Zz[T,Y]$ defined by
$G(T,Y)=F(T,T^E+Y)$. For $y^\prime = y-t_0^E$ we have $G(t_0,y^\prime) = 0$ and 
$$|y^\prime| \leq 2(m+1) H^+ B^m + B^E \leq 2(m+1) H^+ B^E = B''.$$
\noindent
Use then the upper bound for $N(F,B)$ with $F$ and $B$ respectively taken to be $G$ and $B''$.
As $\deg_YG=\deg_YF=n$ and $nE\leq \deg G\leq nE+m$, the main terms are in this case
$$(H^+)^{1/\deg(G)} \leq (H^+)^{1/nE} \leq   (H^+)^{L_1/L_2} = \log(H^+)$$
%and
$$\hbox{\rm and} \hskip 5mm B^{E/\deg G} \leq B^{1/n}.$$

\noindent
Again the upper bound for $N(G,B'')$ is as announced. \end{proof}

\bibliography{LocalMalle}
\bibliographystyle{alpha}

\end{document}